\documentclass[11pt]{amsart}
\usepackage[T1]{fontenc}
\usepackage{lmodern}
\usepackage{amssymb}
\usepackage[margin=1.2in]{geometry}
\usepackage{booktabs}
\usepackage{microtype}
\usepackage[hidelinks]{hyperref}

\DeclareMathOperator{\fix}{fix}
\DeclareMathOperator{\symop}{sym}

\DeclareMathOperator{\prof}{prof}
\newcommand{\U}{\mathfrak{U}}
\newcommand{\Vf}{\mathcal{V}_{F}}
\newcommand{\Vfp}{\mathcal{V}_{F^{+}}}
\newcommand{\Vp}{\mathcal{V}_{p}}
\newcommand{\Vpp}{\mathcal{V}_{p^{+}}}
\newcommand{\pw}{\mathcal{P}}
\newcommand{\restr}{\upharpoonright}
\newcommand{\seq}[1]{\langle #1\rangle}
\newcommand{\AC}{\mathsf{AC}}
\newcommand{\ACWO}{\AC_{\mathrm{WO}}}
\newcommand{\DC}{\mathsf{DC}}
\newcommand{\MC}{\mathsf{MC}}
\newcommand{\KW}{\mathsf{KW}}
\newcommand{\BPI}{\mathsf{BPI}}
\newcommand{\OP}{\mathsf{OP}}

\theoremstyle{plain}
\newtheorem{theorem}{Theorem}[section]
\newtheorem{lemma}[theorem]{Lemma}
\newtheorem{proposition}[theorem]{Proposition}
\newtheorem{corollary}[theorem]{Corollary}
\newtheorem{fact}[theorem]{Fact}
\theoremstyle{definition}
\newtheorem{definition}[theorem]{Definition}
\newtheorem{question}[theorem]{Question}
\theoremstyle{remark}
\newtheorem{remark}[theorem]{Remark}

\begin{document}

\title[Choice principles in countable partition models]{Choice principles in countable partition models}
\author{Brinda Venkataramani}

\date{October 21st, 2026}

\subjclass[2020]{Primary 03E25; Secondary 03E35}
\keywords{permutation models, axiom of choice, dependent choice, weak choice principles, partition supports}

\begin{abstract}
We study the countable partition model over a set of atoms of arbitrary uncountable cardinality $\kappa$. Its supports are partitions of the atoms into countably many blocks, and a permutation fixes a support when it preserves each block setwise. This model was proposed by Bruce and studied by Banerjee. We prove that dependent choice holds for every $\kappa$, although the filter of subgroups is not closed under countable intersections. Choice for well-orderable families holds exactly when $\kappa<\aleph_{\omega_1}$; when $\kappa\ge\aleph_{\omega_1}$, an $\omega_1$-indexed family witnesses its failure. The Hartogs and Lindenbaum numbers of the set of atoms are both $\omega_1$. The proofs use permutations matching blocks of equal cardinality and establish shift completeness of the filter. Together with Banerjee's failure of $\DC_{\omega_1}$, these results determine which dependent choice principles hold in the model.
\end{abstract}

\maketitle

\section{Introduction}\label{sec:intro}

Bruce \cite{Bruce} introduced a permutation model $\Vp$ whose supports are finite partitions, that is, partitions into finitely many blocks, of a countably infinite set of atoms. A support is fixed when each of its blocks is preserved setwise. Every ambient subset of the set of atoms belongs to this model, although the atoms cannot be linearly ordered. Bruce also proposed replacing the atoms by an uncountable set and allowing countable partitions as supports \cite[Section~5]{Bruce}. Banerjee \cite{Banerjee} studied this countable partition model, denoted by $\Vpp$, and proved several failures of choice, including the failure of choice for pairs of atoms and of $\DC_{\omega_1}$ \cite[Theorem~4.8]{Banerjee}.

We study the positive choice principles of $\Vpp$ as the ambient cardinality $\kappa$ of its set of atoms varies. Write $\ACWO$ for choice for well-orderable families of nonempty sets. We prove that $\Vpp\models\DC$ for every uncountable $\kappa$, and that
\[
  \Vpp\models\ACWO\ \Longleftrightarrow\ \kappa<\aleph_{\omega_1}.
\]
For $\kappa\ge\aleph_{\omega_1}$, we exhibit an $\omega_1$-indexed family without a choice function. Since countable choice holds for every $\kappa$, this is the least possible size of a well-orderable family witnessing failure of choice. Thus these models satisfy dependent choice together with failure of choice for well-orderable families. Below $\aleph_{\omega_1}$, the positive result also answers Banerjee's question about well-orderability of unions of well-orderable families of well-orderable sets \cite[Question~7.2]{Banerjee}.

The proofs compare refinements of a partition by counting their blocks of each cardinality. Two refinements with the same counts can be mapped to one another by a permutation fixing every block of the original partition. When $\kappa<\aleph_{\omega_1}$, only countably many infinite cardinals are available, and one countable partition suffices for all these comparisons. For an arbitrary $\kappa$, the same construction works for any given countable sequence of refinements. These two applications give $\ACWO$ below $\aleph_{\omega_1}$ and $\AC_\omega$ for every $\kappa$.

Dependent choice requires compatible permutations along a sequence of refining partitions. We prove that the filter of supports is shift complete and apply the theorem of Karagila and Schilhan \cite{KSDC}. The filter is not $\sigma$-complete, so the resulting model is not closed under countable sequences from the ambient universe. We also compute the Hartogs and Lindenbaum numbers of the atoms and record the standard failures of choice needed for the comparison with other permutation models. The last section collects the results and the remaining questions.

\section{Preliminaries}\label{sec:prelim}

We work in a universe $\U$ of $\mathsf{ZFA}+\AC$ whose set of atoms is $A$, with $|A|=\kappa$ uncountable. Cardinalities used to construct partitions and compare their blocks are computed in $\U$. Assertions about choice principles and well-orderability in $\Vpp$ are internal to that model. A set is \emph{pure} if no atom occurs in its transitive closure; the class of pure sets is called the \emph{kernel}.

A \emph{partition} is a family of pairwise disjoint nonempty sets covering the set being partitioned; its members are called \emph{blocks}. A partition is countable when it has countably many blocks, with finite partitions included. A partition $Q$ \emph{refines} $P$ if every block of $Q$ is contained in a block of $P$. Their common refinement is
\[
 P\wedge Q=\{p\cap q:p\in P,\ q\in Q,\ p\cap q\ne\emptyset\}.
\]
It is countable whenever $P$ and $Q$ are countable. If $Q$ refines $P$ and $p\in P$, we write $Q\restr p=\{q\in Q:q\subseteq p\}$.

We recall the standard permutation-model facts that we will use. A group of permutations of $A$ acts on $\U$ by $\in$-recursion. For an object $x$, its stabilizer is $\symop(x)=\{\pi:\pi x=x\}$. A filter of subgroups is \emph{normal} if it is closed under conjugation. An object is symmetric if its stabilizer belongs to the filter, and hereditarily symmetric if it and all objects in its transitive closure are symmetric.

\begin{fact}[{\cite[Lemma~2.5]{Bruce}; \cite[Chapter~4]{JechAC}}]\label{fact:basic}
Let $G$ be a group of permutations of $A$ and let $\mathcal F$ be a normal filter of subgroups containing the stabilizer of every atom. Let $\mathcal V$ be the class of hereditarily $\mathcal F$-symmetric objects. Then:
\begin{enumerate}
\item $\mathcal V$ is a transitive model of $\mathsf{ZFA}$ containing the kernel and $A$;
\item if $x\subseteq\mathcal V$, then $x\in\mathcal V$ if and only if $\symop(x)\in\mathcal F$;
\item a set $X\in\mathcal V$ is well-orderable in $\mathcal V$ if and only if some member of $\mathcal F$ fixes $X$ pointwise.
\end{enumerate}
\end{fact}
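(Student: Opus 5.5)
The three clauses of Fact~\ref{fact:basic} are the standard facts about permutation models, and I will prove them by following the usual development in \cite[Chapter~4]{JechAC}.

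\textbf{Basic equivariance.} First I record that each $\pi\in G$ induces, by $\in$-recursion, an $\in$-automorphism of $\U$. It fixes every pure set, and it satisfies $\symop(\pi x)=\pi\,\symop(x)\,\pi^{-1}$. Normality of $\mathcal F$ then gives that $\pi x$ is symmetric whenever $x$ is. By induction on rank, $\pi x\in\mathcal V$ whenever $x\in\mathcal V$, so each $\pi\in G$ maps $\mathcal V$ onto itself.

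\textbf{Clause (1).} The class $\mathcal V$ is transitive by the definition of hereditary symmetry. Pure sets have stabilizer $G\in\mathcal F$, so the kernel is contained in $\mathcal V$. Each atom $a$ is symmetric by hypothesis. The set $A$ is fixed by all of $G$, so $A\in\mathcal V$. For the axioms of $\mathsf{ZFA}$, extensionality and foundation are inherited by transitivity. Pairing, union and power set are handled as follows: for power set, $\pw(x)\cap\mathcal V$ has stabilizer containing $\symop(x)$. Separation and replacement use the automorphism property. If $\varphi$ is a formula with parameters $p$, then every $\pi\in\symop(x)\cap\symop(p)$ fixes $\{y\in x:\mathcal V\models\varphi(y,p)\}$, because $\mathcal V\models\varphi(y,p)$ if and only if $\mathcal V\models\varphi(\pi y,\pi p)$. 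For replacement, the image set can be enlarged to $\mathcal V\cap V_\alpha$ for a suitable $\alpha$; this set is fixed by all of $G$.

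\textbf{Clause (2).} If $x\subseteq\mathcal V$, then every element of the transitive closure of $x$ other than $x$ itself already lies in the transitive closure of some member of $\mathcal V$, and is therefore symmetric. Hence $x\in\mathcal V$ exactly when $x$ itself is symmetric, that is, when $\symop(x)\in\mathcal F$.

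\textbf{Clause (3).} Suppose first that $H\in\mathcal F$ fixes $X$ pointwise. Choose a well-order $R$ of $X$ in $\U$. Every element of $R$ is an ordered pair of elements of $X$, and each such pair is fixed by $H$. So $H\le\symop(R)$, and by clause (2) $R\in\mathcal V$ is a well-order of $X$. Conversely, suppose $f\in\mathcal V$ is a bijection from an ordinal $\alpha$ onto $X$. For $\pi\in\symop(f)$ we have $\pi(f(\beta))=(\pi f)(\pi\beta)=f(\beta)$, since ordinals are pure and therefore fixed. Thus $\symop(f)\in\mathcal F$ fixes $X$ pointwise.

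The only step needing care is the separation and replacement argument in clause (1). There one must verify the relativized automorphism claim, namely that $\pi$ preserves satisfaction in $\mathcal V$; this is proved by induction on formulas using $\pi[\mathcal V]=\mathcal V$. The remaining steps are routine.
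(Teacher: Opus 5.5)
Your proof is correct and is the standard development from \cite[Chapter~4]{JechAC}; the paper states this result as a Fact and cites that source instead of giving its own proof. The only ambient input you use beyond the filter axioms is $\AC$ in $\U$, to pick the well-order $R$ in clause~(3), and this matches the paper's standing assumption that $\U\models\mathsf{ZFA}+\AC$.
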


For our model, let $G=\operatorname{Sym}(A)$. For a partition $P$ of $A$, put
\[
 \fix(P)=\{\pi\in G:\pi[p]=p\text{ for every }p\in P\}.
\]
Let $\mathcal F$ consist of the subgroups of $G$ containing $\fix(P)$ for some countable partition $P$ of $A$.

\begin{lemma}[{\cite[Lemma~4.6]{Banerjee}}]\label{lem:filter}
The family $\{\fix(P):P\text{ a countable partition of }A\}$ is a basis for a normal filter containing the stabilizer of every atom.
\end{lemma}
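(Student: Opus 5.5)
To prove Lemma~\ref{lem:filter}, I will check three things directly from the definition of $\fix(P)$: that the family is a filter base (closed under finite intersections up to containment), that it is normal (closed under conjugation up to containment), and that the stabilizer of every atom lies in the generated filter. Every $\fix(P)$ is a subgroup of $G$ because the conditions $\pi[p]=p$ are preserved under composition and inverses. The family is nonempty, since the trivial partition $\{A\}$ gives $\fix(\{A\})=G$.

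For the filter base property, take countable partitions $P$ and $Q$. I claim $\fix(P\wedge Q)\subseteq\fix(P)\cap\fix(Q)$. Suppose $\pi$ preserves every block $p\cap q$ of $P\wedge Q$. Each $p\in P$ is the disjoint union of the nonempty sets $p\cap q$ with $q\in Q$, so $\pi[p]=p$; by symmetry the same holds for $Q$. As noted in the preliminaries, $P\wedge Q$ is countable, since it has at most $|P|\cdot|Q|\le\aleph_0$ blocks. Hence the subgroups containing some $\fix(P)$ form a filter: upward closure is built into the definition of $\mathcal F$, and closure under intersection follows from the inclusion just shown.

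For normality, fix $\sigma\in G$ and a countable partition $P$. I will show $\sigma\fix(P)\sigma^{-1}=\fix(\sigma P)$, where $\sigma P=\{\sigma[p]:p\in P\}$. This is again a countable partition of $A$, because $\sigma$ is a bijection of $A$. The identity is a direct computation. If $\pi[p]=p$ for all $p\in P$, then $\sigma\pi\sigma^{-1}[\sigma[p]]=\sigma[p]$. Conversely, if $\tau\in\fix(\sigma P)$, then $\sigma^{-1}\tau\sigma\in\fix(P)$. So conjugates of basic subgroups are basic, and therefore conjugates of members of $\mathcal F$ are again in $\mathcal F$.

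For atoms, let $a\in A$ and take $P=\{\{a\},A\setminus\{a\}\}$, which is a partition into two blocks since $A$ is uncountable. Any $\pi\in\fix(P)$ satisfies $\pi[\{a\}]=\{a\}$, that is, $\pi a=a$. Hence $\symop(a)\supseteq\fix(P)$, so $\symop(a)\in\mathcal F$. None of these steps is a genuine obstacle. The only point needing care is the conjugation identity, in particular checking that $\sigma P$ is still a partition and that the direction of conjugation is consistent with the way $G$ acts on blocks.
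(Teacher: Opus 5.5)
Your proposal is correct and follows the same route as the paper: the inclusion $\fix(P\wedge Q)\subseteq\fix(P)\cap\fix(Q)$, the conjugation identity $\sigma\fix(P)\sigma^{-1}=\fix(\sigma P)$, and the two-block partition $\{\{a\},A\setminus\{a\}\}$ for atom stabilizers. You also supply the routine checks that the paper's three-line proof leaves implicit, namely that the groups $\fix(P)$ are subgroups, that $P\wedge Q$ and $\sigma P$ are countable partitions, and both inclusions of the conjugation identity.
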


\begin{proof}
We have $\fix(P\wedge Q)\subseteq\fix(P)\cap\fix(Q)$ and $\pi\fix(P)\pi^{-1}=\fix(\pi P)$. For each atom $a$, the partition $\{\{a\},A\setminus\{a\}\}$ has a stabilizer fixing $a$.
\end{proof}

\begin{definition}\label{def:model}
The model $\Vpp$ is the class of hereditarily $\mathcal F$-symmetric objects. A countable partition $P$ \emph{supports} $x$ if $\fix(P)\subseteq\symop(x)$.
\end{definition}

The parameter $\kappa$ is fixed throughout and is suppressed in the notation $\Vpp$. The kernel is the same in $\U$ and $\Vpp$; every pure set is well-orderable in $\Vpp$, and the two universes have the same ordinals and the same $\omega_1$. Normality gives $\pi\Vpp=\Vpp$ and $\symop(\pi x)=\pi\symop(x)\pi^{-1}$. We will use these facts without further comment.

For an infinite well-ordered cardinal $\lambda$, $\AC_\lambda$ means that every $\lambda$-indexed family of nonempty sets has a choice function. We write $\ACWO$ for the corresponding statement for every well-orderable index set. In the notation of \cite{HR}, this is $\mathsf C(\mathrm{WO},\infty)$.

The principle $\DC$ asserts that whenever $B\ne\emptyset$ and $R\subseteq B\times B$ has an $R$-successor for every member of $B$, there is $f:\omega\to B$ with $f(n)\mathrel R f(n+1)$ for every $n$. More generally, for an infinite well-ordered cardinal $\lambda$, $\DC_\lambda$ asserts that if $B\ne\emptyset$ and $R\subseteq{}^{<\lambda}B\times B$ satisfies
\[
 \forall s\in{}^{<\lambda}B\ \exists y\in B\ (s\mathrel R y),
\]
then there is $f:\lambda\to B$ with $(f\restr\xi)\mathrel R f(\xi)$ for every $\xi<\lambda$. Here ${}^{<\lambda}B=\bigcup_{\xi<\lambda}{}^\xi B$. We use only cardinal indices for these principles; $\DC_\omega$ is equivalent to $\DC$. See L\'evy \cite{Levy64} and \cite[Chapter~8]{JechAC}. We write $\DC_{<\lambda}$ for their conjunction over infinite cardinals below $\lambda$.

\section{Subsets of the set of atoms}\label{sec:subsets}

\begin{lemma}\label{lem:support}
Let $P$ be a countable partition of $A$ and $X\subseteq A$. Then $\fix(P)\subseteq\symop(X)$ if and only if $X$ is a union of blocks of $P$.
\end{lemma}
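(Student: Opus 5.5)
The plan is to prove the two directions separately. The backward direction is immediate from the definitions, and the forward direction uses a single transposition inside a block of $P$.

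For the backward direction, suppose $X=\bigcup_{p\in S}p$ for some $S\subseteq P$, and let $\pi\in\fix(P)$. Then $\pi[p]=p$ for every $p\in S$. Since $\pi$ acts on a set of atoms by $\pi X=\pi[X]$, we get
\[
\pi X=\bigcup_{p\in S}\pi[p]=\bigcup_{p\in S}p=X .
\]
So $\pi\in\symop(X)$, and hence $\fix(P)\subseteq\symop(X)$.

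For the forward direction, we argue by contraposition. Suppose $X$ is not a union of blocks of $P$. Then some block $p\in P$ meets $X$ but is not contained in $X$, since otherwise $X$ would equal the union of the blocks it meets. So we may choose atoms $a\in p\cap X$ and $b\in p\setminus X$. Let $\pi$ be the transposition $(a\,b)$. This $\pi$ lies in $G=\operatorname{Sym}(A)$. It preserves $p$ setwise, because both $a$ and $b$ lie in $p$, and it is the identity on every other block. Hence $\pi\in\fix(P)$. On the other hand $b=\pi(a)\in\pi X$ while $b\notin X$, so $\pi X\neq X$. Thus $\fix(P)\not\subseteq\symop(X)$.

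There is no real obstacle here. The only points to check are that atoms are acted on directly, so that $\pi X=\pi[X]$ for $X\subseteq A$, and that the chosen transposition genuinely fixes every block of $P$ setwise. Countability of $P$ plays no role in this argument; the lemma holds for any partition of $A$.
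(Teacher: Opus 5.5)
Your proof is correct and follows the paper's own argument: the reverse direction because every member of $\fix(P)$ preserves each block, and the forward direction by transposing an atom of $p\cap X$ with an atom of $p\setminus X$ inside a single block. You are also right that countability of $P$ plays no role in this lemma.
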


\begin{proof}
The forward implication follows by exchanging an atom of $X\cap p$ with an atom of $p\setminus X$ if both are nonempty, for a block $p\in P$. The reverse implication follows because every member of $\fix(P)$ preserves each block of $P$.
\end{proof}

\begin{corollary}\label{cor:allsubsets}
Every ambient subset of $A$ belongs to $\Vpp$; equivalently, $\pw(A)^{\Vpp}=\pw(A)^{\U}$. This power set also belongs to $\Vpp$.
\end{corollary}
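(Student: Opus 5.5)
The plan is to apply Fact~\ref{fact:basic}(2) twice: first to an arbitrary ambient subset $X\subseteq A$, and then to the ambient power set $\pw(A)^{\U}$ itself.

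For the first claim, fix $X\subseteq A$ in $\U$. Its elements are atoms, and atoms lie in $\Vpp$ by Fact~\ref{fact:basic}(1), so $X\subseteq\Vpp$. By Fact~\ref{fact:basic}(2) it then suffices to show that $\symop(X)\in\mathcal F$. Consider the partition $P=\{X,A\setminus X\}$, with the empty block discarded if $X=\emptyset$ or $X=A$. This is a finite, hence countable, partition of $A$. Since $X$ is a union of blocks of $P$, Lemma~\ref{lem:support} gives $\fix(P)\subseteq\symop(X)$, and therefore $\symop(X)\in\mathcal F$. Thus $X\in\Vpp$.

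This gives $\pw(A)^{\U}\subseteq\pw(A)^{\Vpp}$. The reverse inclusion holds because $\Vpp\subseteq\U$, so every subset of $A$ lying in $\Vpp$ is also a subset of $A$ in $\U$. Hence the two power sets coincide, and this is the stated equivalence.

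For the last sentence, let $\mathcal P=\pw(A)^{\U}$. By the first part, $\mathcal P\subseteq\Vpp$, so by Fact~\ref{fact:basic}(2) again it is enough to show that $\symop(\mathcal P)=G$. Every permutation $\pi$ of $A$ maps subsets of $A$ to subsets of $A$, and $\pi^{-1}$ does the same, so $\pi$ restricts to a bijection of $\mathcal P$ onto itself and $\pi\mathcal P=\mathcal P$. Since $G\in\mathcal F$, we conclude $\mathcal P\in\Vpp$. Its value computed inside $\Vpp$ is $\mathcal P$ as well, by the equivalence just proved.

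No step here is a real obstacle; the argument is direct verification. The only points needing care are discarding the empty block in the degenerate cases $X=\emptyset$ and $X=A$, and checking the hypothesis $x\subseteq\Vpp$ before each use of Fact~\ref{fact:basic}(2).
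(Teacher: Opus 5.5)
Your proposal is correct and follows the same route as the paper: use the nonempty members of $\{X,A\setminus X\}$ as a countable support for $X$, then apply Fact~\ref{fact:basic}(2). The power set is fixed by all of $G$, so it belongs to $\Vpp$ by the same fact. Your extra checks, namely $X\subseteq\Vpp$, the degenerate cases, and the reverse inclusion, just make explicit what the paper leaves implicit.
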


\begin{proof}
The nonempty members of $\{X,A\setminus X\}$ form a support for $X$. The whole power set is fixed by $G$, so Fact~\ref{fact:basic}(2) applies to it as well.
\end{proof}

\begin{lemma}\label{lem:orbits}
Let $P$ be a countable partition of $A$.
\begin{enumerate}
\item The orbits of $\fix(P)$ on $A$ are the blocks of $P$.
\item The set of atoms fixed pointwise by $\fix(P)$ is the union of the singleton blocks of $P$, and is countable.
\item Every function on $A$ supported by $P$ and taking only pure values is constant on each block of $P$.
\end{enumerate}
\end{lemma}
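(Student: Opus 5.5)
For part (1), I will prove the two inclusions separately. Each block of $P$ is invariant under $\fix(P)$ by definition, so every orbit is contained in a single block. Conversely, let $a,b$ be two distinct atoms in one block $p\in P$. The transposition $(a\ b)$ preserves $p$ setwise and fixes every atom outside $p$, so it preserves every block of $P$ and lies in $\fix(P)$. Hence $b$ is in the orbit of $a$, and the orbit of $a$ is the whole block $p$.

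Part (2) follows directly from (1). An atom is fixed by every member of $\fix(P)$ exactly when its orbit is a singleton. By (1) this happens exactly when its block is a singleton. Since $P$ has countably many blocks, there are only countably many singleton blocks, so their union is countable.

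For part (3), let $f$ be a function with domain $A$ and pure values, and suppose $\fix(P)\subseteq\symop(f)$. Take $\pi\in\fix(P)$ and an atom $a$. Since $(a,f(a))\in f$ and $\pi f=f$, we get $(\pi a,\pi(f(a)))\in f$. Permutations act trivially on pure sets, so $\pi(f(a))=f(a)$, and therefore $f(\pi a)=f(a)$. Thus $f$ is constant on each orbit of $\fix(P)$, and by (1) these orbits are exactly the blocks of $P$.

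No step here is a real obstacle. The only point to check is that the transposition in (1) belongs to $\fix(P)$, which holds because it moves only atoms inside the single block $p$.
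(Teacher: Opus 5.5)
Your proof is correct and follows the paper's argument: transpositions of two atoms in the same block lie in $\fix(P)$, which gives (1) and (2), and (3) follows from $f(\pi a)=\pi(f(a))=f(a)$ for $\pi\in\fix(P)$, since pure values are fixed. The paper applies this identity directly to the transposition exchanging $a$ and $b$, whereas you first show that $f$ is constant on orbits and then invoke (1); this is the same argument.
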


\begin{proof}
Any two atoms in the same block can be exchanged by a transposition in $\fix(P)$, giving (1) and (2). For (3), if $a,b$ lie in the same block, let $\pi\in\fix(P)$ exchange them. Since $\pi f=f$ and $f(a)$ is pure,
\[
 f(a)=\pi(f(a))=(\pi f)(\pi a)=f(b).
\]
\end{proof}

\begin{proposition}\label{prop:proper}
Let $\Vfp$ be the permutation model over $A$ whose supports are countable subsets of $A$, with pointwise stabilizers. Then $\Vfp\subsetneq\Vpp$.
\end{proposition}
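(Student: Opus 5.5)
For the inclusion $\Vfp\subseteq\Vpp$ we compare the two filters of subgroups. The filter for $\Vfp$ has as basis the pointwise stabilizers $\fix(E)$ of countable sets $E\subseteq A$, where $\fix(E)=\{\pi:\pi a=a\text{ for all }a\in E\}$. Given such an $E$, let $P_E$ be the partition of $A$ into the singletons $\{a\}$ for $a\in E$, together with $A\setminus E$ when this set is nonempty. Since $E$ is countable, $P_E$ is countable. A permutation preserves every block of $P_E$ exactly when it fixes each atom of $E$, because it then also maps $A\setminus E$ onto itself. Thus the pointwise stabilizer of $E$ equals $\fix(P_E)$, so it lies in $\mathcal F$. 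Hence every subgroup in the filter of $\Vfp$ belongs to $\mathcal F$. We then show by $\in$-induction that every hereditarily symmetric object for the smaller filter is hereditarily $\mathcal F$-symmetric: the object itself is symmetric in the larger filter, and its elements are in $\Vpp$ by the induction hypothesis. This gives $\Vfp\subseteq\Vpp$.

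For properness we exhibit a set in $\Vpp\setminus\Vfp$. The natural witness is a subset $X\subseteq A$ with $X$ and $A\setminus X$ both uncountable, which exists because $\kappa$ is uncountable. By Corollary~\ref{cor:allsubsets}, $X\in\Vpp$. Suppose $X\in\Vfp$, supported by a countable $E$. Since $X$ and $A\setminus X$ are uncountable, we can choose $a\in X\setminus E$ and $b\in(A\setminus X)\setminus E$. The transposition $(a\,b)$ fixes $E$ pointwise but moves $X$, since $a\in X$ while $(a\,b)a=b\notin X$. This contradicts the choice of $E$ as a support, so $X\notin\Vfp$.

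The argument is routine. The step needing the most care is the induction showing that a finer filter yields a larger class of hereditarily symmetric sets; this is standard, since both models use the same group $G=\operatorname{Sym}(A)$ acting on $\U$.
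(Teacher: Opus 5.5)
Your proposal is correct and follows essentially the same route as the paper: you identify the pointwise stabilizer of a countable $E$ with $\fix$ of the partition $\{\{a\}:a\in E\}\cup\{A\setminus E\}$ to get the inclusion, and you witness properness with a subset of $A$ that is uncountable and co-uncountable. The paper derives ``countable or co-countable'' from Lemma~\ref{lem:support} applied to that partition, while you use the transposition directly, which is the same argument inlined.
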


For $\kappa=\aleph_1$, the inclusion is due to Banerjee \cite[Theorem~4.8(1)]{Banerjee}.

\begin{proof}
For a countable $E\subseteq A$, its pointwise stabilizer equals the stabilizer of the countable partition
\[
 \{\{a\}:a\in E\}\cup\{A\setminus E\}.
\]
This gives $\Vfp\subseteq\Vpp$. Every subset of $A$ in $\Vfp$ is countable or co-countable, by the usual support argument for Fraenkel models; see \cite[Chapter~4]{JechAC}. Indeed, if $E$ is a countable support for $X\subseteq A$, then Lemma~\ref{lem:support}, applied to the displayed partition, gives $X\subseteq E$ or $A\setminus X\subseteq E$. An ambient subset of $A$ with both it and its complement uncountable belongs to $\Vpp$ and not to $\Vfp$.
\end{proof}

For $\kappa=\aleph_1$, the model $\Vfp$ is the countable-support Fraenkel model $\mathcal N12(\aleph_1)$ of \cite{HR}.

\section{Failures of choice}\label{sec:failures}

\begin{proposition}[{\cite[Theorem~4.8(3)]{Banerjee}}]\label{prop:pairs}
In $\Vpp$, there is no choice function on $[A]^2$.
\end{proposition}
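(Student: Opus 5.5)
We argue by contradiction, using the usual support argument. Suppose $f\in\Vpp$ is a choice function on $[A]^2$, so that $f(\{a,b\})\in\{a,b\}$ for every pair. Then $f$ has a support: there is a countable partition $P$ of $A$ with $\fix(P)\subseteq\symop(f)$. We aim to find a pair $\{a,b\}$ and a permutation $\pi\in\fix(P)$ that swaps $a$ and $b$. Such a $\pi$ satisfies $\pi\{a,b\}=\{a,b\}$ and $\pi f=f$, which gives
\[
 f(\{a,b\})=(\pi f)(\pi\{a,b\})=\pi\bigl(f(\{a,b\})\bigr).
\]
Whichever of $a,b$ the value $f(\{a,b\})$ is, $\pi$ sends it to the other one. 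This is the contradiction.

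It remains to find the pair. Because $A$ is uncountable and $P$ has only countably many blocks, some block $p\in P$ has at least two elements; indeed some block is uncountable. This is also visible from Lemma~\ref{lem:orbits}(2). Choose distinct $a,b\in p$ and let $\pi$ be the transposition $(a\ b)$. This $\pi$ fixes every atom outside $\{a,b\}\subseteq p$ and maps $p$ onto itself, so $\pi[q]=q$ for every block $q\in P$. Hence $\pi\in\fix(P)\subseteq\symop(f)$.

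There is no real obstacle. The only points to check are the following. First, the action of $\pi$ on $f$, viewed as a set of ordered pairs, satisfies $(\pi f)(\pi x)=\pi(f(x))$; this holds because the action is defined by $\in$-recursion. Second, $[A]^2$ itself lies in $\Vpp$. This follows from Fact~\ref{fact:basic}(2), since $[A]^2$ is fixed by all of $G$ and each pair is supported by the partition $\{\{a\},\{b\},A\setminus\{a,b\}\}$. So the statement is meaningful in $\Vpp$, and the argument above shows it has no choice function there.
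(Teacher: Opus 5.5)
Your proposal is correct and is essentially the paper's proof: take a support $P$ of the choice function, pick two atoms $a,b$ in the same block, and note that the transposition $(a\ b)$ lies in $\fix(P)$, fixes $\{a,b\}$, and moves $f(\{a,b\})$. The extra checks you include, that $[A]^2\in\Vpp$ and that the action commutes with function application, are fine but not needed beyond what the paper takes for granted.
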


\begin{proof}
Suppose that $c$ is such a choice function, supported by $P$. Some block of $P$ contains distinct atoms $a,b$. The transposition exchanging $a$ and $b$ belongs to $\fix(P)$ and fixes $\{a,b\}$ setwise, but moves $c(\{a,b\})$, a contradiction.
\end{proof}

The following consequences use the standard implications from the Boolean prime ideal theorem to the ordering principle, and from a linear order to a choice function on pairs; see \cite{JechAC}.

\begin{corollary}\label{cor:failures}
In $\Vpp$, the set $A$ admits no linear order, the ordering principle $\OP$ and the Boolean prime ideal theorem $\BPI$ fail, and neither $A$ nor $\pw(A)$ is well-orderable.
\end{corollary}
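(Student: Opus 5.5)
The plan is to derive every clause of Corollary~\ref{cor:failures} from Proposition~\ref{prop:pairs}, using only the standard chain of implications quoted just before the statement.

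\textbf{No linear order on $A$.} Suppose $<$ were a linear order of $A$ lying in $\Vpp$. Then the map $\{a,b\}\mapsto\min_<\{a,b\}$ is definable from $<$ inside $\Vpp$. It is therefore a choice function on $[A]^2$ in $\Vpp$, contradicting Proposition~\ref{prop:pairs}.

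We can also argue directly from supports. If $P$ supports $<$, pick distinct $a,b$ in a single block of $P$; such a block exists because $A$ is uncountable and $P$ has only countably many blocks. The transposition $(a\,b)$ lies in $\fix(P)$ and so fixes $<$. Applying it to $a<b$ gives $b<a$, which is impossible.

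\textbf{$\OP$ and $\BPI$ fail.} The ordering principle says every set admits a linear order, so it fails at $A$. Since $\BPI$ implies $\OP$ in $\mathsf{ZFA}$ (see \cite{JechAC}), and $\Vpp$ is a model of $\mathsf{ZFA}$ by Fact~\ref{fact:basic}(1), $\BPI$ fails as well.

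\textbf{$A$ is not well-orderable.} A well-order of $A$ is in particular a linear order, which we have just ruled out. Alternatively, Fact~\ref{fact:basic}(3) would require some $\fix(P)$ to fix $A$ pointwise. By Lemma~\ref{lem:orbits}(2), $\fix(P)$ fixes only countably many atoms, while $A$ is uncountable.

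\textbf{$\pw(A)$ is not well-orderable.} By Corollary~\ref{cor:allsubsets}, $\pw(A)\in\Vpp$. The map $a\mapsto\{a\}$ is fixed by all of $G$, so it lies in $\Vpp$ and injects $A$ into $\pw(A)$. A well-order of $\pw(A)$ in $\Vpp$ would pull back along this injection to a well-order of $A$, which is impossible.

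No step is a real obstacle. The only point needing care is the existence of a block with at least two atoms, and that is automatic from the uncountability of $\kappa$.
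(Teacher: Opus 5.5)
Your proposal is correct and follows the paper's route. Proposition~\ref{prop:pairs} rules out a linear order of $A$ via the $\min$ map, which gives the failure of $\OP$ and hence of $\BPI$; $A$ is then not well-orderable, and $\pw(A)$ is not well-orderable because of the singleton injection. Your extra direct arguments, using a transposition inside one block and using Fact~\ref{fact:basic}(3) with Lemma~\ref{lem:orbits}(2), are valid but not needed.
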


The multiple choice principle $\MC$ asserts that every family of nonempty sets admits a function selecting a nonempty finite subset of each member. The Kinna--Wagner selection principle $\KW$ asks for a nonempty proper subset of each member of a family whose members have at least two elements \cite{KW}. We also use the hierarchy $\KW_\alpha$, which asserts that every set injects into $\pw^\alpha(\eta)$ for some ordinal $\eta$ \cite{KSKW}. Here $\pw^0(\eta)=\eta$, successor stages take power sets, and limit stages take unions of the preceding stages.

\begin{proposition}\label{prop:mckw}
In $\Vpp$, multiple choice and the Kinna--Wagner selection principle fail. There is no injection of $A$ into any pure set, and $\KW_\alpha$ fails for every ordinal $\alpha$.
\end{proposition}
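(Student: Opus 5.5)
The plan is to derive everything from two witnesses: the family $[A]^2$ of Proposition~\ref{prop:pairs}, and a direct support argument showing that $A$ does not inject into any pure set.

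\emph{Multiple choice.} Suppose $F$ selects a nonempty finite subset $F(\{a,b\})$ of each pair $\{a,b\}\in[A]^2$, and let $P$ be a support of $F$. Choose a block $p\in P$ with at least two atoms; one exists because $P$ is countable and $A$ is uncountable. Take distinct $a,b\in p$. The transposition $(a\ b)$ lies in $\fix(P)$ and fixes $\{a,b\}$, so it fixes $F(\{a,b\})$, which must therefore be $\{a,b\}$ itself. This is not yet a contradiction, so we pass to a less symmetric family. Take $p$ with $|p|\ge 3$; such a block exists since $A$ is uncountable and $P$ countable, so some block is uncountable. Let $\mathcal X=\{A\setminus\{a\}:a\in A\}$, or alternatively the family of uncountable blocks of pairs. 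A cleaner route is to use $[A]^3$ together with the fact that $\fix(P)$ acts on $\{a,b,c\}\subseteq p$ as the full symmetric group. The only nonempty subset invariant under $S_3$ is $\{a,b,c\}$ itself, so a $\KW$ selector, which must be a proper nonempty subset, is impossible. Hence $\KW$ fails on $[A]^3$. For $\MC$, the finite selection on $[A]^2$ is forced to be the whole pair, which is allowed, so $\MC$ needs a different witness. Here I will use the standard fact that $\MC$ implies that every set can be linearly ordered over a well-orderable set. More directly, $\MC$ is equivalent in $\mathsf{ZF}$ to $\AC$, but that equivalence needs $\mathsf{ZF}$ without atoms. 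So I will instead use that $\MC$ with $\Vpp\models$ ``$\pw(A)$ exists'' gives a multiple choice function $F$ on the nonempty subsets of $\pw(A)$, or simply on $\pw(A)\setminus\{\emptyset\}$. Let $P$ support $F$ and fix an uncountable block $p$. The set $F(p)$ is a finite nonempty subset of $p$ invariant under $\fix(P)$, which acts transitively on $p$ by Lemma~\ref{lem:orbits}(1). Any invariant subset of $p$ is then $\emptyset$ or $p$, and $p$ is infinite, a contradiction. This settles $\MC$. The same argument gives $\KW$: the selected subset $K(p)$ is a nonempty proper subset of $p$ invariant under a transitive action, which is impossible.

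\emph{No injection into pure sets.} If $f\colon A\to S$ is injective with $S$ pure and $f\in\Vpp$, then by Lemma~\ref{lem:orbits}(3) $f$ is constant on each block of its support. Since some block has at least two elements, $f$ is not injective.

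\emph{$\KW_\alpha$.} For every $\alpha$, $\pw^\alpha(\eta)$ computed in $\Vpp$ is a pure set. This follows by induction on $\alpha$, since subsets of pure sets are pure and the kernel is the same in both universes. An injection of $A$ into it is therefore ruled out by the previous step, so $\KW_\alpha$ fails. The main point to check is that the power set operation inside $\Vpp$ stays within pure sets, which is immediate because the transitive closure of a set of pure sets contains no atoms.
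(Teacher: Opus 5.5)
Your final arguments are correct, and for multiple choice you end up with exactly the paper's proof. A multiple choice function on the nonempty subsets of $A$ is evaluated at an infinite block $p$ of its support. It must return a finite nonempty subset of $p$ invariant under $\fix(P)$, and transitivity on $p$ rules this out.

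The routes differ in two places.

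For $\KW$, the paper observes that a Kinna--Wagner selector on $[A]^2$ is a choice function on pairs and invokes Proposition~\ref{prop:pairs}. Your $[A]^3$ argument and your block argument are both valid, and the block argument refutes $\MC$ and $\KW$ with a single family. However, $[A]^2$ already suffices and is the sharper witness.

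For the absence of an injection of $A$ into a pure set, the paper argues that pure sets are well-orderable in $\Vpp$ while $A$ is not (Corollary~\ref{cor:failures}). You instead apply Lemma~\ref{lem:orbits}(3) directly: a function with pure values is constant on each block of its support, and some block contains two atoms. Your route is more self-contained and does not pass through the failure of choice on pairs. The paper's route is a one-liner given the earlier results and works in any permutation model in which $A$ is not well-orderable.

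Your treatment of $\KW_\alpha$, via purity of $\pw^\alpha(\eta)$, matches the paper's.

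In a final write-up, delete the abandoned detours, none of which is used:
\begin{itemize}
\item the attempt at $\MC$ on $[A]^2$;
\item the family $\{A\setminus\{a\}:a\in A\}$;
\item the remarks relating $\MC$ to linear orders and to $\AC$.
\end{itemize}
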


\begin{proof}
Suppose $c$ selects a nonempty finite subset of each nonempty subset of $A$, and let $P$ support $c$. Choose an infinite block $p\in P$. Every permutation of $p$, extended by the identity on $A\setminus p$, must preserve $c(p)$ setwise. The only subsets of $p$ invariant under all these permutations are $\emptyset$ and $p$, which contradicts the choice of $c(p)$.

A Kinna--Wagner selector on $[A]^2$ would give a choice function on pairs. Finally, every pure set is well-orderable in $\Vpp$, whereas $A$ is not. This rules out an injection of $A$ into any pure set, including every $\pw^\alpha(\eta)$.
\end{proof}

\section{Hartogs and Lindenbaum numbers}\label{sec:invariants}

\begin{definition}
For a set $X$, its \emph{Hartogs number} $\aleph(X)$ is the least ordinal that does not inject into $X$. Its \emph{Lindenbaum number} $\aleph^*(X)$ is the least nonzero ordinal onto which $X$ does not surject.
\end{definition}

\begin{theorem}\label{thm:invariants}
For every uncountable $\kappa$, the model $\Vpp$ satisfies
\[
 \aleph(A)=\aleph^*(A)=\omega_1.
\]
\end{theorem}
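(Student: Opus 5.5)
Since $\aleph(A)\le\aleph^*(A)$ in general (an injection of a nonzero ordinal $\alpha$ into $A$ can be inverted to a surjection of $A$ onto $\alpha$ by sending everything off the range to $0$), it suffices to prove two bounds. First, $\aleph(A)\ge\omega_1$, meaning that $\omega$ injects into $A$ in $\Vpp$. Second, $\aleph^*(A)\le\omega_1$, meaning that $A$ admits no surjection onto $\omega_1$ in $\Vpp$.

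For the lower bound, fix distinct atoms $a_0,a_1,\dots$ in $\U$. Let $P$ be the countable partition $\{\{a_n\}:n<\omega\}\cup\{A\setminus\{a_n:n<\omega\}\}$. The latter block is nonempty since $\kappa$ is uncountable. Every $\pi\in\fix(P)$ fixes each $a_n$, so the function $n\mapsto a_n$ is fixed by $\fix(P)$. Its elements are pairs of atoms and pure ordinals, which are hereditarily symmetric. By Fact~\ref{fact:basic}(2) the function therefore lies in $\Vpp$, and it is an injection $\omega\to A$. Hence $\aleph(A)\ge\omega_1$, and so $\aleph^*(A)\ge\omega_1$ as well.

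For the upper bound, suppose $f\in\Vpp$ is a function from $A$ onto an ordinal $\alpha$, and let $P$ be a countable partition supporting $f$. The values of $f$ are ordinals, hence pure. By Lemma~\ref{lem:orbits}(3), $f$ is constant on each block of $P$, so $\operatorname{ran}(f)$ has at most $|P|\le\aleph_0$ elements, computed in $\U$. Since $\U$ and $\Vpp$ share ordinals and $\omega_1$, the ordinal $\alpha$ is countable. Hence no surjection of $A$ onto $\omega_1$ exists, giving $\aleph^*(A)\le\omega_1$. Combining the bounds gives $\omega_1\le\aleph(A)\le\aleph^*(A)\le\omega_1$.

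The argument is routine, and the only point needing care is the use of Lemma~\ref{lem:orbits}(3). That lemma requires that $f$ be supported by $P$ and take pure values. Both hold here, because the support comes from $f\in\Vpp$ and ordinals are pure.
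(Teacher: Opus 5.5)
Your proof is correct, but it is organized differently from the paper's. The paper treats the two invariants separately:
\begin{enumerate}
\item it shows that $\omega_1$ does not inject into $A$ by combining Fact~\ref{fact:basic}(3) with Lemma~\ref{lem:orbits}(2), so every well-orderable subset of $A$ lies in the countable union of the singleton blocks of some support;
\item it uses Lemma~\ref{lem:orbits}(3) only for $\aleph^*(A)\le\omega_1$;
\item it gives explicit maps for both lower bounds: the enumeration $n\mapsto a_n$, and the surjection sending $a_n$ to $n$ and all other atoms to $0$.
\end{enumerate}
You instead use the ZF inequality $\aleph(X)\le\aleph^*(X)$, which places both invariants between a single lower bound and a single upper bound. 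So you need only the injection $\omega\to A$ and Lemma~\ref{lem:orbits}(3). The characterization of well-orderable sets from Fact~\ref{fact:basic}(3), Lemma~\ref{lem:orbits}(2), and the separate surjection onto $\omega$ all become unnecessary. Your route is more economical. The paper's route gives slightly finer information: every well-orderable subset of $A$ in $\Vpp$ is contained in the set of atoms fixed pointwise by a single $\fix(P)$. You leave one step implicit, passing from ``$\omega$ injects into $A$'' to $\aleph(A)\ge\omega_1$. This is harmless, since each countable ordinal is in bijection with $\omega$ via a pure bijection lying in $\Vpp$, and the paper makes the same tacit step.
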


The upper bound on $\aleph(A)$ also occurs in Banerjee's proof of \cite[Theorem~4.8(5)]{Banerjee}.

\begin{proof}
By Fact~\ref{fact:basic}(3) and Lemma~\ref{lem:orbits}(2), every internally well-orderable subset of $A$ is contained in the union of the singleton blocks of some support, which is countable. Hence $\omega_1$ does not inject into $A$. Choose distinct atoms $\seq{a_n:n<\omega}$ in $\U$. The partition separating all $a_n$ as singletons supports this enumeration, so $\omega$ injects into $A$ in $\Vpp$. Thus $\aleph(A)=\omega_1$.

By Lemma~\ref{lem:orbits}(3), every ordinal-valued function on $A$ in $\Vpp$ has countable range. Thus $A$ does not surject onto $\omega_1$. The map sending $a_n$ to $n$ and every other atom to $0$ belongs to $\Vpp$ and maps $A$ onto $\omega$. Composing with surjections from $\omega$ onto nonzero countable ordinals gives $\aleph^*(A)=\omega_1$.
\end{proof}

In particular, $A$ is Dedekind-infinite in $\Vpp$.

\section{Partitions and conjugation}\label{sec:conjugation}

All the constructions in this section take place in $\U$.

\begin{definition}\label{def:profile}
Let $P,Q$ be partitions of $A$ with $Q$ refining $P$, and let $p\in P$. The \emph{profile} of $Q$ over $p$ records the number of blocks of each cardinality. As a multiplicity function, it is
\[
 \prof_p(Q)=
 \bigl\{\langle\mu,|\{q\in Q\restr p:|q|=\mu\}|\rangle:
           \mu\le |p|\text{ is a cardinal}\bigr\}.
\]
\end{definition}

\begin{lemma}\label{lem:profile}
Let $Q,Q'$ be countable refinements of a countable partition $P$. There is a permutation $\pi\in\fix(P)$ with $\pi Q=Q'$ if and only if $\prof_p(Q)=\prof_p(Q')$ for every $p\in P$.
\end{lemma}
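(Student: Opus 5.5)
The forward direction is a cardinality-preservation check, and the reverse direction builds $\pi$ one block of $P$ at a time and glues the pieces together.

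\emph{Forward direction.} Suppose $\pi\in\fix(P)$ satisfies $\pi Q=Q'$. Fix $p\in P$. Since $\pi[p]=p$ and $\pi$ maps blocks of $Q$ to blocks of $Q'$, a block $q\in Q$ lies in $p$ exactly when $\pi[q]$ does. Hence $\pi$ induces a bijection $Q\restr p\to Q'\restr p$, $q\mapsto\pi[q]$. This bijection preserves cardinalities, because $\pi\restr q$ is a bijection onto $\pi[q]$. So for each cardinal $\mu\le|p|$ it restricts to a bijection between the blocks of size $\mu$ in $Q\restr p$ and those in $Q'\restr p$. Therefore $\prof_p(Q)=\prof_p(Q')$.

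\emph{Reverse direction.} Assume the profiles agree over every $p\in P$. We work in $\U$, where $\AC$ holds, and choose the following data. For each $p\in P$ and each cardinal $\mu\le|p|$, equality of the multiplicities gives a bijection
\[
\sigma_{p,\mu}:\{q\in Q\restr p:|q|=\mu\}\to\{q'\in Q'\restr p:|q'|=\mu\}.
\]
Let $\sigma_p$ be the union of these maps over all $\mu$. Then $\sigma_p$ is a size-preserving bijection $Q\restr p\to Q'\restr p$; the domains and ranges are exhausted because each block of $Q$ inside $p$ has some cardinality $\mu\le|p|$. Next, for each $q\in Q$, let $p$ be the unique block of $P$ containing $q$, and choose a bijection $\tau_q:q\to\sigma_p(q)$. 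Set $\pi=\bigcup_{q\in Q}\tau_q$.

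\emph{Verification.} The blocks of $Q$ partition $A$, and the sets $\sigma_p(q)$, for $q\in Q$ with $q\subseteq p$ where $p$ is the block containing $q$, are exactly the blocks of $Q'$, each occurring once since every $\sigma_p$ is a bijection. So $\pi$ is a bijection of $A$, that is, $\pi\in G$. Each $\tau_q$ maps into the same block $p$ of $P$, and $Q\restr p$ covers $p$ while $Q'\restr p$ also covers $p$; hence $\pi[p]=p$ and $\pi\in\fix(P)$. Finally $\pi[q]=\sigma_p(q)$ for every $q$, so $\pi Q=Q'$.

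There is no real obstacle here. The only points needing care are the use of $\AC$ in $\U$ to choose all the $\sigma_{p,\mu}$ and $\tau_q$ simultaneously, and the observation that the profile counts blocks by an actual cardinal number, not merely finite-versus-infinite, so that infinite blocks are matched with blocks of equal size. Countability of $Q$ and $Q'$ is not needed for the equivalence itself; it only ensures that the partitions involved are supports.
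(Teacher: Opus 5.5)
Your proposal is correct and follows the same approach as the paper. The forward direction is a cardinality-preservation check, and the reverse direction matches blocks of equal cardinality inside each $p\in P$, chooses bijections between the matched blocks, and takes their union. You simply spell out more of the verification than the paper's two-line proof does.
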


\begin{proof}
Such a permutation preserves the cardinalities and multiplicities of the blocks inside each $p$. Conversely, if the profiles agree, match the blocks inside each $p$ by cardinality, choose bijections between the matched blocks, and take their union.
\end{proof}

The next lemma gives one construction for both applications needed in Section~\ref{sec:choice}. The countable set $D$ specifies the infinite cardinalities that may occur in the refinements under consideration.

\begin{lemma}\label{lem:refinement}
Let $P$ be a countable partition of $A$, and let $D$ be a countable set of infinite cardinals containing $|p|$ for every infinite block $p\in P$. There is a countable refinement $R$ of $P$ such that, whenever $Q$ is a countable refinement of $P$ and every infinite block of $Q$ has cardinality in $D$, some $\pi\in\fix(P)$ satisfies that $R$ refines $\pi Q$.
\end{lemma}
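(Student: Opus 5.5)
The plan is to build $R$ block by block: for each $p\in P$ choose a countable partition $R_p$ of $p$ and set $R=\bigcup_{p\in P}R_p$. Given $Q$, we then build, for each $p$, a bijection $\sigma_p\colon p\to p$ that sends every block of $Q\restr p$ onto a union of blocks of $R_p$. The union $\pi=\bigcup_p\sigma_p$ lies in $\fix(P)$, and $R$ refines $\pi Q$. Since $P$ is countable and each $R_p$ is countable, $R$ is a countable refinement of $P$.

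We first enlarge $D$. For every infinite block $p\in P$ with $\lambda=|p|$ singular of cofinality $\omega$, add to $D$ a countable sequence of infinite cardinals cofinal in $\lambda$. Call the result $D'$; it is still countable. Define $R_p$ as follows.
\begin{enumerate}
\item If $p$ is finite or $|p|=\omega$, let $R_p$ be the partition of $p$ into singletons.
\item If $\lambda=|p|$ is uncountable, let $R_p$ consist of $\omega$ many singletons together with $\omega$ many blocks of size $\mu$ for each $\mu\in D'$. If $\operatorname{cf}\lambda>\omega$ we require $\mu\le\lambda$; if $\operatorname{cf}\lambda=\omega$ we require $\mu<\lambda$.
\end{enumerate}
In case (2) the total size is $\lambda$: either $\lambda\in D'$ is itself used, or the sizes used are cofinal in $\lambda$. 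So such an $R_p$ exists. Case (1) is trivial: any partition of $p$ is refined by singletons, so $\sigma_p$ can be the identity.

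In case (2), let $Q\restr p=\{q_i:i\in I\}$ with $I$ countable. The first step assigns to each $q_i$ a set $S_i$ of blocks of $R_p$, with the $S_i$ pairwise disjoint, as follows.
\begin{enumerate}
\item If $q_i$ is finite of size $n$, $S_i$ consists of $n$ fresh singletons.
\item If $|q_i|=\mu<\lambda$, or $|q_i|=\lambda$ with $\operatorname{cf}\lambda>\omega$, then $\mu\in D\subseteq D'$ by hypothesis, and $S_i$ is one fresh block of size $\mu$.
\item If $|q_i|=\lambda$ with $\operatorname{cf}\lambda=\omega$, $S_i$ is a fresh countable family of blocks whose sizes are cofinal in $\lambda$.
\end{enumerate}
Because each size appears $\omega$ times and only countably many demands are made, this can be done by a bookkeeping enumeration.

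The second step absorbs the leftover blocks of $R_p$, and I expect this to be the main obstacle. Each leftover block has some size $\nu$, and we must add it to some $S_i$ with $q_i$ infinite and $|q_i|\ge\nu$.
\begin{itemize}
\item Since $\lambda$ is uncountable and $I$ is countable, some $q_i$ is infinite, so finite leftovers can always be placed.
\item If $\operatorname{cf}\lambda>\omega$, some $q_i$ has size $\lambda\ge\nu$.
\item If $\operatorname{cf}\lambda=\omega$, every leftover has $\nu<\lambda$, while $\sup_i|q_i|=\lambda$. Hence some $q_i$ has $|q_i|\ge\nu$.
\end{itemize}
This is exactly why blocks of size $\lambda$ were excluded in the cofinality-$\omega$ case: such a block could never fit inside a single block of $Q$.

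After absorption, each infinite $q_i$ has received countably many blocks, each of size at most $|q_i|$. The base assignment already had union of size $|q_i|$, so the final union $U_i=\bigcup S_i$ still has size $|q_i|$. The finite $q_i$ receive nothing extra, so $|U_i|=|q_i|$ there as well. The sets $U_i$ partition $p$, so choosing bijections $q_i\to U_i$ and taking their union gives $\sigma_p$. By construction $\sigma_p$ carries each block of $Q\restr p$ onto a union of blocks of $R_p$, which completes the plan.
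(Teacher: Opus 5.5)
Your proposal is correct and is essentially the paper's argument. Both build $R$ block by block from $\omega$ copies of each relevant size, give each block of $Q\restr p$ of size $\lambda$ a countable union of pieces with cofinal sizes, and absorb leftover pieces into infinite cells of at least their size. The one difference is cosmetic: you enlarge $D$ so that the case split is on $\operatorname{cf}\lambda$, whereas the paper splits on whether $D\cap\lambda$ is cofinal in $\lambda$. In the non-cofinal case the paper uses pieces of size $\lambda$ and notes that $Q\restr p$ must then contain a block of size $\lambda$.
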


\begin{proof}
We construct $R$ separately on each block $p\in P$. For countable $p$, let $R\restr p$ consist of singletons. Suppose $\lambda=|p|$ is uncountable, and put
\[
 E_p=
 \begin{cases}
  D\cap\lambda,&\text{if }\sup(D\cap\lambda)=\lambda,\\
  D\cap(\lambda+1),&\text{otherwise}.
 \end{cases}
\]
Partition $p$ into countably infinitely many singletons and countably infinitely many pieces of cardinality $\mu$ for each $\mu\in E_p$. These cardinalities sum to $\lambda$: in the first case $E_p$ is cofinal in $\lambda$, and in the second case $\lambda\in E_p$. Taking these partitions over all $p$ gives a countable partition $R$ of $A$.

Fix a refinement $Q$ as in the statement. By Lemma~\ref{lem:profile}, it suffices to group the pieces of $R\restr p$ into cells having the same cardinalities as the blocks of $Q\restr p$, for every $p$. The countable blocks of $P$ require no argument, since their singletons can be grouped into the blocks of $Q$ themselves. Fix an uncountable block $p$, of cardinality $\lambda$. Since $Q\restr p$ is countable,
\[
 \sup\{|q|:q\in Q\restr p\}=\lambda.
\]
If $D\cap\lambda$ is not cofinal in $\lambda$, then $Q\restr p$ must have a block of size $\lambda$. Assign to each finite block of $Q\restr p$ the required number of singleton pieces, and to each infinite block one piece of its own cardinality. Make these assignments disjoint. Put all unassigned pieces into one of the cells of cardinality $\lambda$. The resulting partition has the required profile.

Suppose instead that $D\cap\lambda$ is cofinal in $\lambda$. Choose strictly increasing $\seq{\mu_j:j<\omega}$ from $D\cap\lambda$ cofinal in $\lambda$. For each $j$, divide the pieces of $R\restr p$ of cardinality $\mu_j$ into two countably infinite families, one for assignments of exact cardinality and one enumerated as $\{B_{j,t}:t<\omega\}$. Finite blocks again receive singleton pieces. A block of infinite cardinality $\nu<\lambda$ receives one piece of size $\nu$, using the first family when $\nu=\mu_j$. The $t$-th block of cardinality $\lambda$, if any, receives
\[
 C_t=\bigcup_{j<\omega}B_{j,t}.
\]
These cells are disjoint, have cardinality $\lambda$, and are disjoint from all assignments of smaller cardinality.

For each $\mu\in E_p$, add all remaining pieces of cardinality $\mu$ to one designated infinite cell of cardinality at least $\mu$. Such a cell exists because the target cardinalities have supremum $\lambda$. Add any remaining singletons to one infinite cell. A cell of cardinality $\nu$ receives leftovers from at most countably many classes, each contributing at most $\nu$ atoms, so its cardinality remains $\nu$. Finite cells receive no leftovers. We have partitioned $p$ into the required cells.

The resulting coarsening of $R$ has the same profile as $Q$ over every block of $P$, completing the proof.
\end{proof}

\begin{corollary}\label{cor:refinements}
Let $P$ be a countable partition of $A$.
\begin{enumerate}
\item If $\kappa<\aleph_{\omega_1}$, there is a countable refinement $R$ of $P$ such that for every countable refinement $Q$ of $P$, some $\pi\in\fix(P)$ satisfies that $R$ refines $\pi Q$.
\item For any uncountable $\kappa$ and any sequence $\seq{Q_n:n<\omega}$ of countable refinements of $P$, there are a countable refinement $R$ of $P$ and permutations $\pi_n\in\fix(P)$ such that $R$ refines $\pi_nQ_n$ for every $n$.
\end{enumerate}
\end{corollary}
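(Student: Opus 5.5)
Both parts will follow from Lemma~\ref{lem:refinement}; the only work is choosing the countable set $D$ of infinite cardinals correctly in each case.

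For part (1), take $D$ to be the set of all infinite cardinals $\le\kappa$. Since $\kappa<\aleph_{\omega_1}$, we have $\kappa=\aleph_\alpha$ for some countable $\alpha$. Hence $D=\{\aleph_\beta:\beta\le\alpha\}$ is countable. It certainly contains $|p|$ for every infinite block $p\in P$. Every infinite block of any countable refinement $Q$ of $P$ has cardinality at most $\kappa$, so it lies in $D$. Applying Lemma~\ref{lem:refinement} once to $P$ and this $D$ yields a single $R$. For every countable refinement $Q$ there is then some $\pi\in\fix(P)$ with $R$ refining $\pi Q$, which is exactly (1). Note that $R$ depends only on $P$ and $D$, not on $Q$; the quantifier order is the point of (1).

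For part (2), let $\kappa$ be arbitrary and let $\seq{Q_n:n<\omega}$ be given. Set
\[
 D=\{|p|:p\in P,\ p\text{ infinite}\}\cup\bigcup_{n<\omega}\{|q|:q\in Q_n,\ q\text{ infinite}\}.
\]
This is a countable union of countable sets, so it is countable; this uses $\AC$ in $\U$, which is available because all constructions take place in $\U$. Each $Q_n$ is a countable refinement of $P$ whose infinite blocks have cardinalities in $D$. Lemma~\ref{lem:refinement} therefore gives one $R$ such that for each $n$ some $\pi\in\fix(P)$ has $R$ refining $\pi Q_n$. Using $\AC$ in $\U$, choose such a $\pi_n$ for each $n$.

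There is no real obstacle here. The only point needing care is in (1): checking that the hypothesis $\kappa<\aleph_{\omega_1}$ is exactly what makes the set of all infinite cardinals up to $\kappa$ countable. This lets one $D$, and hence one $R$, serve uniformly for all $Q$.
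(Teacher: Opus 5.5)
Your proposal is correct and follows the paper's proof exactly: both parts are obtained from Lemma~\ref{lem:refinement}, taking $D$ to be all infinite cardinals $\le\kappa$ (countable since $\kappa<\aleph_{\omega_1}$) for (1), and the infinite block cardinalities of $P$ and of all the $Q_n$ for (2). Your checks of the lemma's hypotheses, and your use of $\AC$ in $\U$ for the countability of $D$ and the choice of the $\pi_n$, fill in the details the paper leaves implicit.
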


\begin{proof}
For (1), take $D$ to be all infinite cardinals at most $\kappa$; this is countable because $\kappa<\aleph_{\omega_1}$. For (2), take $D$ to contain the infinite cardinalities of all blocks of $P$ and of all the $Q_n$. Apply Lemma~\ref{lem:refinement}.
\end{proof}

\section{Choice for countable and well-orderable families}\label{sec:choice}

\begin{theorem}\label{thm:ctble}
For every uncountable $\kappa$, the model $\Vpp$ satisfies $\AC_\omega$.
\end{theorem}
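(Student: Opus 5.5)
Let $\seq{X_n:n<\omega}\in\Vpp$ be a family of nonempty sets, supported by a countable partition $P$ (so the sequence, and hence each $X_n$, is fixed by $\fix(P)$). The plan is to choose, in $\U$, an element $x_n\in X_n$ for each $n$, and then move each $x_n$ by a permutation in $\fix(P)$ so that all the moved choices share a single countable support. The resulting sequence will then be symmetric.

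First, for each $n$, use $\AC$ in $\U$ to pick $x_n\in X_n$ together with a countable partition $S_n$ supporting $x_n$. Replacing $S_n$ by $Q_n=S_n\wedge P$, we obtain a countable refinement of $P$ that still supports $x_n$, since $\fix(Q_n)\subseteq\fix(S_n)$. Next, apply Corollary~\ref{cor:refinements}(2) to the sequence $\seq{Q_n:n<\omega}$. This yields a countable refinement $R$ of $P$ and permutations $\pi_n\in\fix(P)$ such that $R$ refines $\pi_nQ_n$ for every $n$. Put $y_n=\pi_nx_n$. Because $\pi_n$ fixes $X_n$ (it lies in $\fix(P)\subseteq\symop(X_n)$), we have $y_n\in\pi_nX_n=X_n$. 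Moreover,
\[
\symop(y_n)=\pi_n\symop(x_n)\pi_n^{-1}\supseteq\pi_n\fix(Q_n)\pi_n^{-1}=\fix(\pi_nQ_n)\supseteq\fix(R),
\]
where the last inclusion holds because $R$ refines $\pi_nQ_n$.

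Finally, define $f=\{\seq{n,y_n}:n<\omega\}$. Each pair $\seq{n,y_n}$ is fixed by $\fix(R)$, since $n$ is pure, so $\fix(R)\subseteq\symop(f)$. All elements of $f$ lie in $\Vpp$ because each $y_n=\pi_nx_n$ does and $\pi\Vpp=\Vpp$. Fact~\ref{fact:basic}(2) then gives $f\in\Vpp$, and $f$ is a choice function for the family.

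The substantive step is the simultaneous conjugation into one common support, and Corollary~\ref{cor:refinements}(2) supplies exactly that. The remaining point to check is that the conjugating permutations lie in $\fix(P)$, so they preserve each $X_n$. This is the reason for refining the supports to $Q_n=S_n\wedge P$ before applying the corollary.
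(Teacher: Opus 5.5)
Your proposal is correct and follows the paper's proof essentially verbatim. You choose elements and supports in $\U$, use Corollary~\ref{cor:refinements}(2) to conjugate them by permutations in $\fix(P)$ into one common support $R$, and conclude with Fact~\ref{fact:basic}(2); your explicit passage to $Q_n=S_n\wedge P$ and the stabilizer computation for $y_n$ simply spell out details the paper leaves implicit.
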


\begin{proof}
Let $\seq{X_n:n<\omega}\in\Vpp$ be a sequence of nonempty sets, supported by $P$. Then $\fix(P)$ fixes each $X_n$ setwise. In $\U$, choose $x_n\in X_n$ and a countable refinement $Q_n$ of $P$ supporting $x_n$. Apply Corollary~\ref{cor:refinements}(2), obtaining $R$ and $\pi_n\in\fix(P)$ with $R$ refining $\pi_nQ_n$.

Put $y_n=\pi_nx_n$. We have $y_n\in X_n$, and $R$ supports every $y_n$. Thus $R$ supports the function $f=\{\langle n,y_n\rangle:n<\omega\}$. Its elements are hereditarily symmetric, so $f\in\Vpp$ by Fact~\ref{fact:basic}(2).
\end{proof}

We record the following standard consequences of countable choice for comparison with the models in Section~\ref{sec:comparison}; see \cite{JechAC,HR}. Part~(2) settles the countable case of Banerjee's union question \cite[Question~7.2]{Banerjee} for every uncountable $\kappa$.

\begin{corollary}\label{cor:consequences}
For every uncountable $\kappa$, the model $\Vpp$ satisfies:
\begin{enumerate}
\item every countable union of countable sets is countable;
\item every countable union of well-orderable sets is well-orderable;
\item every Dedekind-finite set is finite, and hence there is no infinite amorphous set;
\item $\omega_1$ is regular.
\end{enumerate}
\end{corollary}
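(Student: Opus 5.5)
These four items are the standard consequences of countable choice, and the plan is to derive each from Theorem~\ref{thm:ctble} by arguing inside $\Vpp$. The one point needing care is (2), where we must choose well-orderings rather than enumerations. The only model-specific input is that $\Vpp\models\AC_\omega$. We do not need the refinement machinery of Section~\ref{sec:conjugation} again.

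For (1), let $\seq{X_n:n<\omega}$ be countable sets in $\Vpp$. Set $S_n$ to be the set of surjections from $\omega$ onto $X_n$, or onto $X_n\cup\{\emptyset\}$ if $X_n$ is empty. This set is nonempty, and the sequence $\seq{S_n}$ lies in $\Vpp$ because it is definable from $\seq{X_n}$. By $\AC_\omega$ we pick $g_n\in S_n$. The map $(n,k)\mapsto g_n(k)$ then surjects $\omega\times\omega$ onto $\bigcup_n X_n$, so the union is countable.

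For (2), we argue in the same way, but now pick well-orderings $\preceq_n$ of the $X_n$. The union is then well-ordered by first comparing the least index $n$ with $x\in X_n$, and then comparing by $\preceq_n$.

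For (3), let $X$ be infinite in $\Vpp$ and put $T_n=[X]^n$, the set of $n$-element subsets of $X$, which is nonempty for each $n$. By $\AC_\omega$ we choose $F_n\in T_{2^n}$. Each $F_n$ is a finite set of size $2^n$. The union $\bigcup_{m<n}F_m$ has size less than $2^n$, so the set $F_n\setminus\bigcup_{m<n}F_m$ is a nonempty finite set. Each such nonempty finite subset of $X$ must be turned into a single element. The cleanest way is to apply $\AC_\omega$ a second time, to the sequence of these differences, which lies in $\Vpp$ because it is definable from $\seq{F_n}$. This yields pairwise distinct elements, hence an injection of $\omega$ into $X$, so $X$ is Dedekind-infinite. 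An infinite amorphous set would be Dedekind-finite, since its image of $\omega$ would split into evens and odds. So no such set exists.

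For (4), a cofinal map $\omega\to\omega_1$ would write $\omega_1$ as a countable union of countable ordinals. Each countable ordinal is countable in $\Vpp$, because pure sets and ordinals, together with their enumerations, are the same in $\U$ and $\Vpp$. By (1) the union would be countable, which is a contradiction.

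The only step that needs real attention is the choice in (2) and (3). Every selection must be made by an application of $\AC_\omega$ to a sequence that lies in $\Vpp$. We check this in each case: the sequence is definable from the given one, hence is supported by the same $P$, and it is hereditarily symmetric by Fact~\ref{fact:basic}(2).
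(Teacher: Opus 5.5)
Your proposal is correct and follows the paper's route: the paper gives no argument beyond recording these items as standard $\mathsf{ZF}$ consequences of $\AC_\omega$ (Theorem~\ref{thm:ctble}), with a citation, and you have written out those standard derivations. Two cosmetic remarks: in (1), if some $X_n$ is empty your map $(n,k)\mapsto g_n(k)$ may also hit $\emptyset$, so it only shows that $\bigcup_n X_n$ is a subset of a countable set, which is still countable; and (4) needs no choice at all, since any $f:\omega\to\omega_1$ in $\Vpp$ is a pure set and is therefore bounded, because $\omega_1$ is regular in $\U$.
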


\begin{theorem}\label{thm:wochoice}
If $\kappa<\aleph_{\omega_1}$ is uncountable, then $\Vpp\models\ACWO$.
\end{theorem}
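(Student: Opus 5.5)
We follow the proof of Theorem~\ref{thm:ctble}, replacing Corollary~\ref{cor:refinements}(2) by the uniform refinement of Corollary~\ref{cor:refinements}(1). Let $\seq{X_i:i\in I}\in\Vpp$ be a family of nonempty sets with $I$ well-orderable in $\Vpp$. Fix a well-order of $I$ in $\Vpp$. We may then reindex and assume $I=\eta$ is an ordinal. Indeed, the well-order gives a bijection $I\cong\eta$ in $\Vpp$, and composing with it transports a choice function back. Let $P$ be a countable partition supporting the indexed family $i\mapsto X_i$. Since every ordinal is fixed by every permutation, $\fix(P)$ fixes each $X_i$ setwise.

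Apply Corollary~\ref{cor:refinements}(1) to $P$, which is possible because $\kappa<\aleph_{\omega_1}$, and obtain a countable refinement $R$ of $P$. Working in $\U$ with $\AC$, choose for each $i<\eta$ some $x_i\in X_i$ and a countable partition $Q_i'$ supporting $x_i$. Put $Q_i=Q_i'\wedge P$. This is a countable refinement of $P$, and it still supports $x_i$ because $\fix(Q_i)\subseteq\fix(Q_i')$. By the choice of $R$ there is $\pi_i\in\fix(P)$ with $R$ refining $\pi_iQ_i$. Set $y_i=\pi_ix_i$. Then $y_i\in\pi_iX_i=X_i$. Moreover,
\[
\symop(y_i)=\pi_i\symop(x_i)\pi_i^{-1}\supseteq\pi_i\fix(Q_i)\pi_i^{-1}=\fix(\pi_iQ_i)\supseteq\fix(R),
\]
where the last inclusion holds because $R$ refines $\pi_iQ_i$. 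So every $y_i$ is supported by the single partition $R$.

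Let $f=\{\seq{i,y_i}:i<\eta\}$. Every $\sigma\in\fix(R)$ fixes each $i$ and each $y_i$, hence each ordered pair and hence $f$. The pairs are hereditarily symmetric, since their components lie in $\Vpp$. So Fact~\ref{fact:basic}(2) gives $f\in\Vpp$, and $f$ is the required choice function.

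The main obstacle is already handled by Corollary~\ref{cor:refinements}(1). For arbitrarily many (uncountably many) refinements $Q_i$ one needs a \emph{single} countable $R$ absorbing all of them up to $\fix(P)$-conjugation, and this is exactly where $\kappa<\aleph_{\omega_1}$ enters: it makes the set $D$ of available infinite cardinals countable. The only remaining points to check carefully are that ordinals are fixed pointwise, which justifies $\pi_iX_i=X_i$, and that the reduction from a well-orderable index set to an ordinal is carried out inside $\Vpp$.
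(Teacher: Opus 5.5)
Your proposal is correct and follows essentially the same route as the paper's proof. You fix a support $P$ under which $\fix(P)$ fixes every member of the family, take the single refinement $R$ from Corollary~\ref{cor:refinements}(1), and use a permutation in $\fix(P)$ to conjugate an arbitrary chosen element of each set into an $R$-supported one before selecting with $\AC$ in $\U$. The only differences are cosmetic. You reindex by an ordinal, so pointwise fixing comes from ordinals being fixed, whereas the paper invokes Fact~\ref{fact:basic}(3) for a well-orderable set $W$. You also make explicit the step $Q_i=Q_i'\wedge P$, which the paper leaves implicit.
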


\begin{proof}
Let $W\in\Vpp$ be a well-orderable family of nonempty sets. By Fact~\ref{fact:basic}(3), there is a countable partition $P$ such that $\fix(P)$ fixes $W$ pointwise. Choose $R$ as in Corollary~\ref{cor:refinements}(1).

For $X\in W$, choose in $\U$ an element $x\in X$ and a countable refinement $Q$ of $P$ supporting $x$. There is $\pi\in\fix(P)$ such that $R$ refines $\pi Q$. Then $\pi x\in X$, and $R$ supports $\pi x$. Every member of $W$ therefore contains an element supported by the same partition $R$.

Using choice in $\U$, select such an element $c(X)$ for each $X\in W$. Since $R$ refines $P$, its stabilizer fixes every $X\in W$ and every $c(X)$. Thus $R$ supports $c$, and Fact~\ref{fact:basic}(2) gives $c\in\Vpp$.
\end{proof}

\begin{corollary}\label{cor:union}
If $\kappa<\aleph_{\omega_1}$ is uncountable, then in $\Vpp$ every union of a well-orderable family of well-orderable sets is well-orderable.
\end{corollary}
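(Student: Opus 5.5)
The plan is to derive Corollary~\ref{cor:union} from Theorem~\ref{thm:wochoice} by the standard argument that $\ACWO$ makes unions of well-orderable families of well-orderable sets well-orderable. Work in $\Vpp$ with $\kappa<\aleph_{\omega_1}$ uncountable. Let $W$ be a well-orderable family of well-orderable sets, and fix a well-ordering $\prec$ of $W$. We may discard $\emptyset$ from $W$ if it occurs, since it contributes nothing to the union.

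First, for each $X\in W$, let $O_X$ be the set of all well-orderings of $X$. Each $O_X$ is nonempty, because $X$ is well-orderable in $\Vpp$. Moreover, $X\mapsto O_X$ is indexed by the well-orderable set $W$, so $\{O_X:X\in W\}$ is a well-orderable family of nonempty sets. Theorem~\ref{thm:wochoice} then provides a choice function $X\mapsto{<_X}$ in $\Vpp$. This step is the only one that uses the hypothesis $\kappa<\aleph_{\omega_1}$; it is there precisely to choose the orderings simultaneously.

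Next, order $\bigcup W$ in $\Vpp$ as follows. For $y\in\bigcup W$, let $X_y$ be the $\prec$-least member of $W$ containing $y$. Put $y\lhd z$ if and only if either $X_y\prec X_z$, or $X_y=X_z$ and $y<_{X_y}z$. This relation is definable from $\prec$ and the chosen function, so it lies in $\Vpp$. It is a well-ordering: it is the lexicographic order obtained by embedding $\bigcup W$ into the ordinal sum $\sum_{X\in W}(X,<_X)$ via $y\mapsto\langle X_y,y\rangle$. The sum is well-ordered, and that map is injective.

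No step is a genuine obstacle. The only point requiring care is that the family $\{O_X\}$ really is well-orderable inside $\Vpp$. This holds because the map $X\mapsto O_X$ is definable in $\Vpp$, so its image is a surjective image of $W$ and is well-ordered by taking least preimages under $\prec$. Alternatively, one can apply Theorem~\ref{thm:wochoice} to the family indexed by $W$ directly.
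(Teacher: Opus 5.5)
Your proof is correct and is exactly the deduction the paper intends. The paper states the corollary without a written proof, as an immediate consequence of Theorem~\ref{thm:wochoice}: use $\ACWO$ to choose well-orderings of all members simultaneously, then order the union lexicographically, and you also correctly handle the one point needing care, namely that the family $\{O_X:X\in W\}$ is well-orderable (or can be indexed by $W$).
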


This gives a positive answer to Banerjee's question \cite[Question~7.2]{Banerjee} for $\kappa<\aleph_{\omega_1}$. We return to the remaining cases in Question~\ref{q:union}.

\section{Failure of choice for \texorpdfstring{$\omega_1$}{omega1}-indexed families}\label{sec:failurewo}

The next calculation takes place in $\U$.

\begin{lemma}\label{lem:sizes}
For every countable partition $P$, the set
\[
 \left\{\left|\bigcup\mathcal S\right|:\mathcal S\subseteq P\right\}
\]
is countable.
\end{lemma}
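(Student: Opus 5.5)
The plan is to compute the cardinality of a union of blocks directly as a cardinal sum. Since the blocks of $P$ are pairwise disjoint, for $\mathcal S\subseteq P$ we have
\[
 \left|\bigcup\mathcal S\right|=\sum_{p\in\mathcal S}|p|.
\]
We then split into two cases. If every block in $\mathcal S$ is finite, this sum is either a natural number or $\aleph_0$, because $\mathcal S$ is countable. If some block of $\mathcal S$ is infinite, the sum of countably many cardinals, at least one of them infinite, equals the supremum of the summands by cardinal arithmetic in $\U$, which satisfies $\AC$. Thus $|\bigcup\mathcal S|=\sup\{|p|:p\in\mathcal S\}$ in this case.

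Let $C=\{|p|:p\in P\}$. This set is countable. It therefore suffices to show that the set of suprema of nonempty subsets of $C$ is countable. This is the one point needing care, because $C$ has uncountably many subsets. The key observation is that the supremum of a nonempty $T\subseteq C$ either belongs to $C$, when $T$ has a maximum, or is a limit of $C$. More precisely, it equals $\sup(C\cap\nu)$ for the ordinal $\nu=\sup T$. Such a limit point is determined by the cut $\{c\in C:c<\sup T\}$, which is an initial segment of $C$.

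The first attempt is the direct route: $C$ is a countable set of cardinals, hence well-ordered in some countable order type $\alpha<\omega_1$. Its initial segments are indexed by ordinals $\le\alpha$, so there are only countably many of them. Each limit supremum is $\sup$ of some initial segment, so the set of such suprema is countable.

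Combining the two cases, the set of values $|\bigcup\mathcal S|$ is contained in
\[
 \omega\cup\{\aleph_0\}\cup C\cup\{\sup I:I\text{ an initial segment of }C\}.
\]
This is a countable union of countable sets, hence countable, which proves the lemma. We expect no real obstacle. The only subtle step is the reduction of uncountably many subsets to countably many initial segments, and that step relies on the well-ordering of the cardinals.
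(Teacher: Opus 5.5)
Your proposal is correct and follows essentially the same route as the paper: you compute $|\bigcup\mathcal S|$ as a countable cardinal sum, which is a natural number, $\aleph_0$, or the supremum of the block sizes, and then observe that every such supremum is the supremum of one of the countably many initial segments of the countable well-ordered set $\{|p|:p\in P\}$. The only cosmetic differences are that you split cases on whether some block is infinite rather than on whether $\mathcal S$ is finite, and that you handle attained suprema by adding $C$ itself to the bounding set.
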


\begin{proof}
Let $B=\{|p|:p\in P\}$, a countable set of cardinals. A finite subfamily has union of cardinality in $\omega\cup B$. For an infinite subfamily $\mathcal S$,
\[
 \left|\bigcup\mathcal S\right|=\max\bigl(\aleph_0,\sup\{|p|:p\in\mathcal S\}\bigr).
\]
Every supremum of a subset of $B$ is the supremum of an initial segment of $B$. Since $B$, ordered by cardinality, has countable order type, it has only countably many initial segments. Only countably many cardinalities can therefore occur.
\end{proof}

\begin{theorem}\label{thm:failure}
If $\kappa\ge\aleph_{\omega_1}$, then $\Vpp\models\neg\AC_{\omega_1}$. Consequently $\ACWO$ fails in $\Vpp$.
\end{theorem}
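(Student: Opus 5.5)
The plan is to use an $\omega_1$-indexed family of sets of subsets of $A$, indexed by the uncountable cardinals $\aleph_{\alpha+1}$ for $\alpha<\omega_1$. All these cardinals are at most $\kappa$ because $\kappa\ge\aleph_{\omega_1}$. For $\alpha<\omega_1$, put
\[
 X_\alpha=\{Y\subseteq A:|Y|=\aleph_{\alpha+1}\}.
\]
Each $X_\alpha$ is nonempty. By Corollary~\ref{cor:allsubsets}, every member of $X_\alpha$ lies in $\Vpp$. Each $X_\alpha$ is fixed by all of $G$, so $X_\alpha\in\Vpp$, and so is the sequence $\seq{X_\alpha:\alpha<\omega_1}$, which is fixed by $G$ and has pure indices. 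Note that cardinality here is computed in $\U$, but $X_\alpha$ is still a legitimate object of $\Vpp$ since it is $G$-invariant. I would rather use $\U$-cardinality than internal notions; the definition only needs to be made in $\U$.

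Suppose $c\in\Vpp$ is a choice function on this family, supported by a countable partition $P$. Since each index $\alpha$ is pure and $c$ is fixed by $\fix(P)$, each value $c(\alpha)$ is fixed by $\fix(P)$. Indeed, $\pi c=c$ gives $\pi(c(\alpha))=c(\pi\alpha)=c(\alpha)$. By Lemma~\ref{lem:support}, each $c(\alpha)$ is therefore a union of blocks of $P$. Lemma~\ref{lem:sizes} then says that $\{|c(\alpha)|:\alpha<\omega_1\}$ is countable. However, $|c(\alpha)|=\aleph_{\alpha+1}$, and these values are pairwise distinct for distinct $\alpha$, giving uncountably many values. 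This is a contradiction, so $\AC_{\omega_1}$ fails. Since $\omega_1$ is well-orderable in $\Vpp$, $\ACWO$ fails as well.

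The only point needing care is the step that values of $c$ are fixed, which uses that the indices are pure ordinals. Beyond that, Lemmas~\ref{lem:support} and~\ref{lem:sizes} do all the work. The main potential obstacle is ensuring that $\aleph_{\alpha+1}\le\kappa$ for all $\alpha<\omega_1$, which holds since $\aleph_{\alpha+1}<\aleph_{\omega_1}\le\kappa$.
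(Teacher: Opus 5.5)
Your proof is correct and follows the paper's argument. The only difference is cosmetic: the paper indexes the $G$-invariant family by ambient cardinalities $\aleph_\gamma$ for $\gamma<\omega_1$ instead of $\aleph_{\gamma+1}$. It then uses purity of the indices, Lemma~\ref{lem:support}, and Lemma~\ref{lem:sizes} exactly as you do.
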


\begin{proof}
For $\gamma<\omega_1$, let
\[
 S_\gamma=\{X\subseteq A:|X|^{\U}=\aleph_\gamma\}.
\]
These sets are nonempty and pairwise disjoint. Every $X\in S_\gamma$ belongs to $\Vpp$ by Corollary~\ref{cor:allsubsets}. Each $S_\gamma$ is fixed by $G$, since permutations preserve ambient cardinalities. Hence the sequence $\seq{S_\gamma:\gamma<\omega_1}$ belongs to $\Vpp$.

Suppose that $c$ chooses $X_\gamma\in S_\gamma$ for every $\gamma$, and that $P$ supports $c$. Since the indices are pure, $\fix(P)$ fixes each $X_\gamma$ setwise. By Lemma~\ref{lem:support}, each $X_\gamma$ is a union of blocks of $P$. Lemma~\ref{lem:sizes} says that their ambient cardinalities range over a countable set, contrary to $|X_\gamma|^{\U}=\aleph_\gamma$ for all $\gamma<\omega_1$.
\end{proof}

\begin{corollary}\label{cor:dichotomy}
For every uncountable $\kappa$,
\[
 \Vpp\models\ACWO\quad\Longleftrightarrow\quad\kappa<\aleph_{\omega_1}.
\]
If $\kappa\ge\aleph_{\omega_1}$, the least cardinality of a well-orderable family of nonempty sets without a choice function is $\aleph_1$.
\end{corollary}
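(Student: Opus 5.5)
The equivalence is obtained by putting together Theorem~\ref{thm:wochoice} and Theorem~\ref{thm:failure}. If $\kappa<\aleph_{\omega_1}$, Theorem~\ref{thm:wochoice} gives $\Vpp\models\ACWO$ directly. Conversely, if $\kappa\ge\aleph_{\omega_1}$, Theorem~\ref{thm:failure} produces the $\omega_1$-indexed family $\seq{S_\gamma:\gamma<\omega_1}$ in $\Vpp$ with no choice function. Its index set $\omega_1$ is pure, hence well-orderable in $\Vpp$, so $\ACWO$ fails. Together these two cases give the displayed biconditional for every uncountable $\kappa$.

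For the second sentence, assume $\kappa\ge\aleph_{\omega_1}$ and let $W\in\Vpp$ be a well-orderable family of nonempty sets. I would show that if $W$ is finite or countable in $\Vpp$, it has a choice function. Fix in $\Vpp$ an enumeration of $W$, indexed by some $n\le\omega$. In the finite case a choice function exists already in $\mathsf{ZFA}$. In the countably infinite case, Theorem~\ref{thm:ctble} gives $\AC_\omega$ in $\Vpp$; applying it to the enumerated sequence yields a choice function, and composing with the inverse of the enumeration gives a choice function on $W$ itself. Hence every well-orderable family without a choice function has cardinality at least $\aleph_1$ in $\Vpp$.

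The example from Theorem~\ref{thm:failure} shows the bound is attained. Its index set is $\omega_1$, and the ordinals and $\omega_1$ agree between $\U$ and $\Vpp$. The $S_\gamma$ are pairwise distinct, indeed disjoint, so the family $\{S_\gamma:\gamma<\omega_1\}$ has cardinality exactly $\aleph_1$ in $\Vpp$. A choice function on this set of sets is the same as one on the indexed sequence, so the minimum is exactly $\aleph_1$. No step poses a real obstacle. The only point needing care is that countability and the well-ordering are internal to $\Vpp$, which is handled by applying $\AC_\omega$ inside the model to an internal enumeration.
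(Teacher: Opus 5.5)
Your proposal is correct and follows the paper's reasoning. The biconditional combines Theorem~\ref{thm:wochoice} with Theorem~\ref{thm:failure}, and the minimality of $\aleph_1$ comes from $\AC_\omega$ (Theorem~\ref{thm:ctble}) together with the family $\seq{S_\gamma:\gamma<\omega_1}$; you also supply details the paper leaves implicit, namely applying $\AC_\omega$ to an internal enumeration and using that $\gamma\mapsto S_\gamma$ is injective.
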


For $\kappa\ge\aleph_{\omega_1}$, the same argument describes the limitation of Corollary~\ref{cor:refinements}(2). Choose $X_\gamma\subseteq A$ of ambient cardinality $\aleph_\gamma$ and let $Q_\gamma=\{X_\gamma,A\setminus X_\gamma\}$. No countable partition $R$ can refine $\pi_\gamma Q_\gamma$ for every $\gamma<\omega_1$, for any choice of permutations $\pi_\gamma$: the unions of blocks of $R$ would then realize every cardinality $\aleph_\gamma$. Thus countably many refinements can always be treated by one $R$ after conjugation, while some family of $\aleph_1$ refinements cannot.

\section{Dependent choice}\label{sec:dc}

Countable completeness of the normal filter is a sufficient condition for dependent choice in a permutation model: it gives closure under countable sequences of members of the model formed in the ambient universe. The following proposition shows that this condition fails for $\Vpp$.

\begin{proposition}\label{prop:sigma}
The model $\Vpp$ is not closed under countable sequences from $\U$, and $\mathcal F$ is not $\sigma$-complete.
\end{proposition}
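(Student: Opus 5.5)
The plan is to exhibit an explicit countable sequence, formed in $\U$, of subsets of $A$. Each term will lie in $\Vpp$, but the sequence itself will not. Once this witness is in hand, failure of $\sigma$-completeness follows directly. Indeed, if $\mathcal F$ were closed under countable intersections, then the intersection of the stabilizers $\symop(X_n)$ would lie in $\mathcal F$. That intersection fixes the sequence $\seq{X_n:n<\omega}$, since the indices $n$ are pure. Fact~\ref{fact:basic}(2) would then put the sequence into $\Vpp$, a contradiction.

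For the witness, I would fix in $\U$ a decomposition of $A$ into $\aleph_1$ pairwise disjoint nonempty sets $\{A_\alpha:\alpha<\omega_1\}$; this is possible since $\kappa\ge\aleph_1$. Next I would choose injections $e_\alpha:\omega\to 2$ whose values separate the indices: fix distinct reals $r_\alpha\in{}^\omega 2$ for $\alpha<\omega_1$. Then put
\[
 X_n=\bigcup\{A_\alpha:r_\alpha(n)=1\}.
\]
Each $X_n$ is a subset of $A$, so it lies in $\Vpp$ by Corollary~\ref{cor:allsubsets}.

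Suppose that a countable partition $P$ supports $s=\seq{X_n:n<\omega}$. Since the indices are pure, $\fix(P)$ fixes each $X_n$ setwise, so by Lemma~\ref{lem:support} each $X_n$ is a union of blocks of $P$. It follows that each block $p\in P$ lies entirely inside $X_n$ or entirely outside it, for every $n$. Atoms of a single block therefore all have the same ``signature'' $n\mapsto[a\in X_n]$. An atom in $A_\alpha$ has signature $r_\alpha$, and these signatures are distinct for distinct $\alpha$. Hence each block meets at most one $A_\alpha$, so $P$ has at least $\aleph_1$ blocks, contradicting countability.

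I expect no real obstacle here. The only point needing care is the observation that a support of the whole sequence supports each term. This holds because $\pi s=\seq{\pi X_n:n<\omega}$ by the pure indexing, so $\pi s=s$ forces $\pi X_n=X_n$ for every $n$. The same argument gives the relation to $\sigma$-completeness more concretely. Each $\symop(X_n)$ contains $\fix(\{X_n,A\setminus X_n\})$, and the intersection of these groups is $\fix$ of the partition into signature classes. That partition has $\aleph_1$ nonempty classes. It therefore contains no $\fix(P)$ for countable $P$, because by Lemma~\ref{lem:orbits}(1) such a containment would force every block of $P$ to lie inside a single class.
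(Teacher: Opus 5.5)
Your proof is correct and is essentially the paper's: the paper takes $A_0\subseteq A$ of size $\aleph_1$, an injection $h:A_0\to{}^\omega 2$, and $X_n=\{a\in A_0:h(a)(n)=1\}$, then obtains $h(a)=h(b)$ from a transposition of two atoms of $A_0$ lying in a common block, which is your signature argument with singleton classes in place of your sets $A_\alpha$. The phrase about ``injections $e_\alpha:\omega\to 2$'' is a harmless slip, since what you actually use is the family of distinct reals $r_\alpha$.
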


\begin{proof}
Choose $A_0\subseteq A$ of ambient cardinality $\aleph_1$ and an injection $h:A_0\to{}^\omega2$. For $n<\omega$, put $X_n=\{a\in A_0:h(a)(n)=1\}$. Each $X_n$ belongs to $\Vpp$.

If $P$ supported $\seq{X_n:n<\omega}$, some block of $P$ would contain distinct $a,b\in A_0$. Their transposition would fix every $X_n$, implying $h(a)=h(b)$, a contradiction. The same argument shows that $\bigcap_{n<\omega}\symop(X_n)$ contains no basic subgroup $\fix(P)$, although every $\symop(X_n)$ belongs to $\mathcal F$.
\end{proof}

For comparison, $\Vfp$ is closed under countable sequences from $\U$, since countably many countable supports have countable union. That closure gives $\DC$ in $\Vfp$. To prove $\DC$ in $\Vpp$, we instead verify the following condition of Karagila and Schilhan.

\begin{definition}[{\cite[Definition~8]{KSDC}}]\label{def:shift}
A normal filter $\mathcal F$ on $G$ is \emph{$(\omega+1)$-shift complete} if for every sequence $\seq{H_n:n<\omega}$ in $\mathcal F$ there are $\pi_n\in G$ such that, writing $K_n=\pi_nH_n\pi_n^{-1}$,
\[
 \bigcap_{n<\omega}K_n\in\mathcal F
 \qquad\text{and}\qquad
 \pi_m\pi_n^{-1}\in K_n\quad(n<m<\omega).
\]
\end{definition}

This is equivalent to shift completeness as defined in \cite[Definition~2]{KSDC}, as noted immediately after \cite[Definition~8]{KSDC}. We use the shorter term below.

\begin{lemma}\label{lem:shift}
The filter $\mathcal F$ of the countable partition model is shift complete.
\end{lemma}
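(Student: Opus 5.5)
The plan is to verify the $(\omega+1)$-form of Definition~\ref{def:shift} directly. Since $\mathcal F$ has the basis of Lemma~\ref{lem:filter}, we may shrink each $H_n$ to a basic subgroup $\fix(P_n)$ with $P_n$ a countable partition. Shrinking is harmless: if $\pi_n$ work for the $\fix(P_n)$, then $K_n$ only grows, so both requirements persist. Since $\pi\fix(P)\pi^{-1}=\fix(\pi P)$, we need permutations $\pi_n$ and a single countable partition $R$ with two properties. First, $R$ refines every $\pi_nP_n$, which gives $\fix(R)\subseteq\bigcap_n K_n$. Second, $\pi_m\pi_n^{-1}\in\fix(\pi_nP_n)$ for $n<m$.

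For the second condition I would build the $\pi_n$ by recursion in the form $\pi_0=\mathrm{id}$ and $\pi_{n+1}=\sigma_{n+1}\pi_n$. Here $\sigma_{n+1}\in\fix(S_n)$, where $S_n=\pi_0P_0\wedge\dots\wedge\pi_nP_n$ is a countable partition. Then $\pi_m\pi_n^{-1}=\sigma_m\cdots\sigma_{n+1}$. Each factor $\sigma_k$ with $k>n$ lies in $\fix(S_{k-1})\subseteq\fix(\pi_nP_n)$, so the cocycle condition is automatic. The whole difficulty is then the first condition: the $S_n$ refine one another, and we must keep them all coarsenings of one fixed countable $R$.

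To achieve this I would fix $R$ in advance, in the style of the proof of Lemma~\ref{lem:refinement}, as a ``reservoir'' partition. Take a countable set $D$ of infinite cardinals, chosen in $\U$ to contain every cardinality that will be needed. On each uncountable region, $R$ consists of infinitely many singletons and infinitely many pieces of each size in $D$, distributed so that the supply is inexhaustible. The inductive invariant, stated for every block $s$ of $S_n$, has three parts: $S_n$ is a coarsening of $R$; each infinite block $s$ is a union of pieces of $R$; and $R\restr s$ still contains infinitely many singletons and infinitely many pieces of each size in the relevant $E_s$ (cofinal in $|s|$, or containing $|s|$). At stage $n+1$, let $Q=S_n\wedge\pi_nP_{n+1}$, a countable refinement of $S_n$. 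Inside each block $s$, regroup the pieces of $R\restr s$ into cells with the same profile as $Q\restr s$. This uses the matching argument of Lemma~\ref{lem:refinement}, with the sets $C_t=\bigcup_j B_{j,t}$ handling blocks of full size. The regrouping must hand each new infinite cell an infinite subreservoir of every relevant size; I would split each $\omega$-indexed family of pieces into infinitely many infinite subfamilies. By Lemma~\ref{lem:profile}, this regrouping is $\sigma_{n+1}Q$ for some $\sigma_{n+1}\in\fix(S_n)$. Hence $S_{n+1}=\sigma_{n+1}Q$ is again a coarsening of $R$ satisfying the invariant, and $R$ refines $\pi_{n+1}P_{n+1}$.

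The main obstacle is choosing $D$ before the recursion runs. The blocks of $Q$ are intersections of blocks of $S_n$ with blocks of $\pi_nP_{n+1}$, and their cardinalities depend on the earlier choices of $\pi_n$. A naive $D$, namely the block sizes of all $P_k$, need not contain them. The first thing I would try is to exploit the freedom in $\sigma_{n+1}$, that is, the freedom in how pieces are regrouped. The aim is to arrange that every block of $S_{n+1}$ has cardinality in the countable set $D_0$ of block sizes of all the $P_k$. I would then show by induction that the intersection sizes occurring stay inside the closure of $D_0$ under the operations used in Lemma~\ref{lem:sizes} (suprema of initial segments and $\max$ with $\aleph_0$), which is countable. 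If that fails, the fallback is to enlarge $D$ to the countable set of all cardinals definable from $D_0$ by countably many such operations, and to check that it is closed under the cardinalities of intersections that actually arise.
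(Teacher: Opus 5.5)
There is a gap at the step you yourself flag: the choice of $D$ is never settled, and both remedies you propose fail. The parts before it are correct. Shrinking to $\fix(P_n)$ and the recursion $\pi_{n+1}=\sigma_{n+1}\pi_n$ with $\sigma_{n+1}\in\fix(S_n)$ do reduce the lemma to finding one countable $R$ that every $S_n$ coarsens.

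First, the block sizes of $Q$ do \emph{not} depend on earlier choices, and the freedom in $\sigma_{n+1}$ cannot change any cardinality. Your cocycle condition gives $\pi_n[p]=\pi_k[p]$ for $k\le n$ and $p\in P_k$, hence
\[
 S_n=\pi_n(P_0\wedge\cdots\wedge P_n),\qquad Q=S_n\wedge\pi_nP_{n+1}=\pi_n(P_0\wedge\cdots\wedge P_{n+1}).
\]
So the profile of $Q$ over each block of $S_n$ is that of the untwisted meet over the corresponding block, whatever the $\sigma_k$ were. Second, these sizes need not lie in any closure of $D_0$. Take $\kappa\ge\aleph_2$, $P_0=\{X,A\setminus X\}$ and $P_1=\{Y,A\setminus Y\}$, with all four sets of size $\kappa$ but $|X\cap Y|=\aleph_1$. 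Then $D_0=\{\kappa\}$, and suprema and $\max$ with $\aleph_0$ produce only $\aleph_0$ and $\kappa$. No union of countably many singletons and pieces of those sizes has cardinality $\aleph_1$. So the very first regrouping, with $\pi_0=\mathrm{id}$ and $Q=P_0\wedge P_1$, is already impossible, and your fallback fails for the same reason.

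The displayed identity is also the missing idea. Take $D$ to be the set of infinite block sizes of all the finite meets $P_0\wedge\cdots\wedge P_n$; it is countable and fixed before the recursion starts. The paper builds this in at the outset by replacing $H_n$ with $\fix(S_n)$ for a refining sequence $\seq{S_n:n<\omega}$. With this $D$, your reservoir recursion should go through, provided the invariant is stated precisely. Countable cells must consist only of singletons, since a cell containing an $\aleph_0$-piece can never later be split into singletons. Each uncountable cell $c$ must keep infinitely many singletons and infinitely many pieces of every size in $D\cap|c|$. When $D\cap|c|$ is not cofinal in $|c|$, it must also keep infinitely many pieces of size $|c|$.

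Repaired this way, your route genuinely differs from the paper's, which needs no stage-by-stage invariant. The paper builds all the $\pi_n$ at once from a countable set $\mathcal B$ of branches through the forest of blocks. Their eventual cardinalities satisfy $\sum_{b\ni s}\mu_b=|s|$, and the paper takes $T_\omega=\{F_b:b\in\mathcal B\}$ as the common refinement.
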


\begin{proof}
Let $\seq{H_n:n<\omega}$ be a sequence in $\mathcal F$. Choose countable partitions $S_n$ such that $S_{n+1}$ refines $S_n$ and $\fix(S_n)\subseteq H_n$. We will replace this sequence by partitions $T_n$ admitting a common countable refinement, and choose permutations that carry each earlier block to the same replacement.

Form the forest whose level-$n$ nodes are the blocks of $S_n$, with their levels attached. We write $|s|$ for the cardinality of the block at a node $s$. A block at level $n+1$ is a child of the block at level $n$ containing it. There are countably many nodes. For each node $t$, choose a child $d(t)$ of cardinality $|t|$ if one exists, and any child otherwise. Let $b_t$ be the branch through $t$ which subsequently follows the chosen children, and put
\[
 \mathcal B=\{b_t:t\text{ is a node}\}.
\]
This is a countable set of branches meeting every node. The cardinalities along a branch are nonincreasing, and hence eventually constant. Let $\mu_b$ be the eventual cardinality along $b\in\mathcal B$.

We claim that for every node $s$,
\begin{equation}\label{eq:branchsum}
 \sum_{\substack{b\in\mathcal B\\s\in b}}\mu_b=|s|.
\end{equation}
We argue by induction on $|s|$. Follow the chosen children from $s$. If their cardinalities remain $|s|$ forever, then $\mu_{b_s}=|s|$. For infinite $|s|$, this gives the lower bound in~\eqref{eq:branchsum}, while countability of $\mathcal B$ gives the upper bound $|s|\cdot\aleph_0=|s|$. For finite $|s|$, a child of full cardinality must be the only child, so there is exactly one branch through $s$, giving equality.

Otherwise, let $t$ be the first node on this path whose children all have cardinality less than $|s|$; then $|t|=|s|$. By induction, the sums for the children equal their cardinalities. Since these children partition $t$, summing over them gives $\sum_{b\ni t}\mu_b=|t|$. If $|s|$ is infinite, this is a lower bound for the sum through $s$, and the same countability argument gives the upper bound. If $|s|$ is finite, every step from $s$ to $t$ has a unique child, so every branch through $s$ passes through $t$. This proves~\eqref{eq:branchsum}.

For each root $p\in S_0$, partition $p$ into nonempty sets $F_b$, one for each $b\in\mathcal B$ through $p$, with $|F_b|=\mu_b$. Equation~\eqref{eq:branchsum} permits this. For every node $s$, set
\[
 s^*=\bigcup\{F_b:b\in\mathcal B,\ s\in b\}.
\]
Then $|s^*|=|s|$, roots satisfy $p^*=p$, and the sets $c^*$ for children $c$ of $s$ partition $s^*$. Let $T_n=\{s^*:s\in S_n\}$ and $T_\omega=\{F_b:b\in\mathcal B\}$. These are countable partitions, $T_0=S_0$, and $T_\omega$ refines every $T_n$.

For each $n$, take the union of bijections $s\to s^*$ over $s\in S_n$, obtaining a permutation $\pi_n$. For any $k\le n$ and $s\in S_k$, its level-$n$ descendants partition $s$, so $\pi_n[s]=s^*$. In particular, for $n<m$ the permutation $\pi_m\pi_n^{-1}$ fixes every block of $T_n$ setwise. Consequently,
\[
 \pi_m\pi_n^{-1}\in\fix(T_n)
 =\pi_n\fix(S_n)\pi_n^{-1}\subseteq K_n,
\]
and $\fix(T_\omega)\subseteq\bigcap_{n<\omega}K_n$. This proves shift completeness.
\end{proof}

\begin{theorem}\label{thm:dc}
For every uncountable $\kappa$, the model $\Vpp$ satisfies $\DC$.
\end{theorem}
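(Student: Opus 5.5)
The plan is to obtain Theorem~\ref{thm:dc} directly from the Karagila--Schilhan theorem \cite{KSDC}. That theorem says: if the ambient universe satisfies $\DC$ (here it satisfies full $\AC$) and the normal filter is shift complete, then the symmetric model satisfies $\DC$. Lemma~\ref{lem:shift} already provides shift completeness of $\mathcal F$, and $\U\models\mathsf{ZFA}+\AC$. So most of the work consists in checking that our setting meets the hypotheses of that theorem. Their framework is stated for symmetric extensions, and permutation models are the special case with the trivial forcing, or equivalently their theorem is also stated for permutation models. The filter $\mathcal F$ is normal and contains the stabilizers of atoms by Lemma~\ref{lem:filter}, which is the standard requirement.

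In case a more self-contained argument is wanted, I would unwind the proof along the following lines. Fix $B\ne\emptyset$ and $R\subseteq B\times B$ in $\Vpp$ with no terminal elements, and let $P_0$ be a countable partition supporting both $B$ and $R$. Working in $\U$ with $\AC$, I build a sequence $x_0,x_1,\dots$ with $x_n\mathrel R x_{n+1}$. Along the way I choose countable partitions $S_n$ that refine $P_0$ and each other, so that $S_n$ supports $x_0,\dots,x_n$. At each stage the next element is picked as an $R$-successor of the current one, and the successor's support is intersected with the previous partition, using the common refinement $\wedge$.

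The resulting sequence need not lie in $\Vpp$, by Proposition~\ref{prop:sigma}. This is where shift completeness enters. Set $H_n=\fix(S_n)$. The proof of Lemma~\ref{lem:shift} yields permutations $\pi_n$ with $\pi_m\pi_n^{-1}\in\fix(T_n)$ for $n<m$ and a partition $T_\omega$ refining every $T_n=\pi_nS_n$. Put $y_n=\pi_nx_n$. Since each $\pi_n$ lies in $\fix(S_0)\subseteq\fix(P_0)$, it preserves $R$, so $\pi_n x_n\mathrel R \pi_n x_{n+1}$. Moreover $\pi_{n+1}\pi_n^{-1}\in\fix(T_n)$, and $T_n$ supports $\pi_nx_n$, which gives $\pi_{n+1}x_n=\pi_nx_n$. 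Hence
\[
y_n=\pi_{n+1}x_n\mathrel R \pi_{n+1}x_{n+1}=y_{n+1}.
\]
Each $y_n$ is supported by $T_n$, and therefore by $T_\omega$. So $\fix(T_\omega)$ fixes the function $n\mapsto y_n$, and Fact~\ref{fact:basic}(2) puts it in $\Vpp$.

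I expect the main obstacle to be the bookkeeping needed to make $\pi_0$, and hence every $\pi_n$, preserve $P_0$. In Lemma~\ref{lem:shift} this holds because $T_0=S_0$ and all $\pi_n$ map each root block onto itself, so each $\pi_n$ lies in $\fix(S_0)$. I would therefore start the sequence $S_n$ with $S_0$ refining $P_0$. The other point to verify is the identity $\pi_{n+1}x_n=\pi_nx_n$, which holds because $\pi_{n+1}\pi_n^{-1}$ fixes every block of $T_n$ and $T_n$ supports $\pi_nx_n$. Citing \cite{KSDC} makes both points automatic.
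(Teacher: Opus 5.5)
Your proposal is correct and follows the paper's proof, which likewise just applies the Karagila--Schilhan theorem \cite{KSDC} (the ambient $\AC$ universe satisfies $\DC$) together with Lemma~\ref{lem:shift}. Your optional unwinding is also sound: the construction in Lemma~\ref{lem:shift} gives $T_n=\pi_nS_n$ and $T_0=S_0$, so $\pi_n\in\fix(S_0)\subseteq\fix(P_0)$, and the identity $\pi_{n+1}x_n=\pi_nx_n$ then yields an $R$-chain $\seq{y_n:n<\omega}$ supported by $T_\omega$.
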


\begin{proof}
Over an ambient model of $\DC$, shift completeness of the normal filter implies $\DC$ in the associated permutation model \cite[Sections~2--3]{KSDC}. Apply Lemma~\ref{lem:shift}.
\end{proof}

\begin{remark}
The filter is not $(\omega+2)$-shift complete, since that would imply $\sigma$-completeness by \cite[Lemma~9]{KSDC}, contrary to Proposition~\ref{prop:sigma}.
\end{remark}

Banerjee proved $\Vpp\models\neg\DC_{\omega_1}$ for every uncountable $\kappa$ \cite[Theorem~4.8(5)]{Banerjee}. Together with Theorem~\ref{thm:dc} and the monotonicity of the dependent choice principles, this gives the complete answer for cardinal indices.

\begin{corollary}\label{cor:dc}
For every uncountable $\kappa$ and every infinite well-ordered cardinal $\lambda$,
\[
 \Vpp\models\DC_\lambda\quad\Longleftrightarrow\quad\lambda=\omega.
\]
\end{corollary}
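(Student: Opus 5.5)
The corollary is a matter of combining two results already stated: Theorem~\ref{thm:dc} for $\lambda=\omega$, and Banerjee's failure of $\DC_{\omega_1}$ \cite[Theorem~4.8(5)]{Banerjee} for every uncountable $\lambda$. The only additional ingredient is the monotonicity of the dependent choice hierarchy, namely that $\DC_\lambda$ implies $\DC_\mu$ for infinite cardinals $\mu\le\lambda$.

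For the backward direction, $\DC_\omega$ is equivalent to $\DC$ by the convention fixed in Section~\ref{sec:prelim}. Theorem~\ref{thm:dc} gives $\Vpp\models\DC$ for every uncountable $\kappa$, so $\Vpp\models\DC_\omega$.

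For the forward direction, suppose $\lambda>\omega$. Since $\lambda$ is an uncountable cardinal, $\lambda\ge\omega_1$. I would verify, inside $\Vpp$ (a model of $\mathsf{ZFA}$), that $\DC_\lambda$ implies $\DC_{\omega_1}$. Let $R\subseteq{}^{<\omega_1}B\times B$ be a relation such that every $s\in{}^{<\omega_1}B$ has an $R$-successor. Define $R'\subseteq{}^{<\lambda}B\times B$ by letting $s\mathrel{R'}y$ hold iff either $\operatorname{dom}(s)<\omega_1$ and $s\mathrel R y$, or $\operatorname{dom}(s)\ge\omega_1$ (with $y$ arbitrary). Every $s\in{}^{<\lambda}B$ then has an $R'$-successor. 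Moreover, $R'$ is definable from $R$, $B$ and $\lambda$, so it lies in $\Vpp$. Applying $\DC_\lambda$ gives $f:\lambda\to B$, and $f\restr\omega_1$ witnesses $\DC_{\omega_1}$ for $R$. Hence $\Vpp\models\DC_\lambda$ would yield $\Vpp\models\DC_{\omega_1}$, contradicting Banerjee's theorem. Since $\omega_1$ is the same in $\U$ and in $\Vpp$, Banerjee's statement applies verbatim.

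No step here is a genuine obstacle. The only point needing care is that the monotonicity argument is carried out internally in $\Vpp$, which uses no choice. One should also confirm that Banerjee's result is stated for all uncountable $\kappa$, not only for $\kappa=\aleph_1$; the paper asserts this in the sentence preceding the corollary.
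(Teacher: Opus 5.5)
Your proposal is correct and is exactly the paper's argument: Theorem~\ref{thm:dc} handles $\lambda=\omega$, and Banerjee's failure of $\DC_{\omega_1}$ together with monotonicity of the $\DC_\lambda$ hierarchy (which you verify explicitly, whereas the paper treats it as standard) rules out every $\lambda\ge\omega_1$. Your remaining concern about whether Banerjee's result holds for arbitrary $\kappa$ can be settled within the paper itself. Since $A$ is not well-orderable (Corollary~\ref{cor:failures}) and $\aleph(A)=\omega_1$ by Theorem~\ref{thm:invariants}, the relation ``$y\in A\setminus\operatorname{ran}(s)$'' on ${}^{<\omega_1}A$ witnesses $\neg\DC_{\omega_1}$.
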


\section{Comparison and open questions}\label{sec:comparison}

Table~\ref{tab:comparison} compares the basic Fraenkel model $\Vf$ and Bruce's finite partition model $\Vp$, both over countably many atoms, with the countable-support Fraenkel model $\Vfp$ and the countable partition model $\Vpp$, both over $\aleph_1$ atoms.

\begin{table}[htbp]
\centering
\small
\begin{tabular}{@{}lcccc@{}}
\toprule
 & $\Vf$ & $\Vfp$ & $\Vp$ & $\Vpp$ \\
\midrule
all ambient subsets of $A$ belong & fails & fails & holds & holds \\
choice on $[A]^2$ & fails & fails & fails & fails \\
$\ACWO$ & fails & holds & fails & holds \\
$\DC$ & fails & holds & fails & holds \\
closed under countable sequences from $\U$ & fails & holds & fails & fails \\
an infinite amorphous set exists & holds & fails & open & fails \\
$(\aleph(A),\aleph^*(A))$ & $(\omega,\omega)$ & $(\omega_1,\omega_1)$ & $(\omega,\omega)$ & $(\omega_1,\omega_1)$ \\
\bottomrule
\end{tabular}
\caption{Choice principles in the four permutation models.}\label{tab:comparison}
\end{table}

In all four models, failure of choice on $[A]^2$ implies failure of $\OP$, $\BPI$, and the Kinna--Wagner selection principle. Since pure sets are well-orderable in these models, $\KW_\alpha$ also fails for every ordinal $\alpha$. Multiple choice fails by the argument of Proposition~\ref{prop:mckw}, applied to an infinite block of a partition support, or to $A\setminus E$ for a subset support $E$; for $\Vp$, see also \cite[Theorem~4.5(4)]{Banerjee}.

The $\Vpp$ column follows from the preceding sections. The standard properties of the Fraenkel models are recorded in \cite{JechAC,HR}, and the finite partition model is studied in \cite{Bruce,Banerjee}. For completeness, the same countable family witnesses failure of $\AC_\omega$ in $\Vf$ and $\Vp$: use $\seq{[A]^n:1\le n<\omega}$. If a choice function were supported by a finite set $E$, respectively a finite partition $P$, its value on $[A]^n$ would have to be contained in $E$, respectively in the union of the finite blocks of $P$. Both sets have a fixed finite size, giving a contradiction for large $n$.

The positive $\ACWO$ entry for $\Vfp$ follows by the argument of Theorem~\ref{thm:wochoice}. Given a countable support $E$ fixing the family pointwise, choose a countable $S\supseteq E$ with $S\setminus E$ infinite. Every countable support $D$, enlarged to contain $E$, can be moved into $S$ by a permutation fixing $E$ pointwise: an injection of $D\setminus E$ into $S\setminus E$ extends to a permutation because both complements have cardinality $\aleph_1$. Thus each member of the family has an $S$-supported element, and the selected choice function is supported by $S$.

Closure under countable sequences gives $\DC$ in $\Vfp$. The ambient enumeration of the atoms witnesses failure of this closure in $\Vf$ and $\Vp$, while Proposition~\ref{prop:sigma} treats $\Vpp$. The invariant calculations for the finite-support models follow from their having only finitely many fixed atoms and finite ranges for supported ordinal-valued functions.

For arbitrary uncountable $\kappa$, the results for $\Vpp$ are collected in the following corollary and Table~\ref{tab:parameter}.

\begin{corollary}\label{cor:summary}
For every uncountable $\kappa$, the model $\Vpp$ satisfies $\AC_\omega$, $\DC$, and $\aleph(A)=\aleph^*(A)=\omega_1$, and fails $\DC_{\omega_1}$. It satisfies $\ACWO$ exactly when $\kappa<\aleph_{\omega_1}$; otherwise an $\omega_1$-indexed family witnesses failure of choice.
\end{corollary}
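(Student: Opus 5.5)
The corollary is a collection of results already proved, so the plan is to cite each clause in turn; the only real work is checking that every cited result covers an arbitrary uncountable $\kappa$.

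\emph{Choice clauses.} Countable choice $\AC_\omega$ is Theorem~\ref{thm:ctble}, which holds for every uncountable $\kappa$. Dependent choice is Theorem~\ref{thm:dc}. That theorem rests on the shift completeness established in Lemma~\ref{lem:shift}, together with $\DC$ in the ambient universe $\U$; the latter is available because $\U$ satisfies $\AC$.

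\emph{Hartogs and Lindenbaum numbers.} The equations $\aleph(A)=\aleph^*(A)=\omega_1$ are Theorem~\ref{thm:invariants}.

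\emph{Failure of $\DC_{\omega_1}$.} This is Banerjee's result \cite[Theorem~4.8(5)]{Banerjee}, as recorded before Corollary~\ref{cor:dc}. If we want an internal argument instead, we can argue as follows. Assume $\DC_{\omega_1}$ holds. Then $\AC_{\omega_1}$ holds, since $\DC_\lambda$ implies $\AC_\lambda$ \cite[Chapter~8]{JechAC}. The standard argument also lets $\DC_{\omega_1}$ build an injection of $\omega_1$ into any set that is not well-orderable, such as $A$ by Corollary~\ref{cor:failures}: at each stage, pick a new element of $A$ outside the countable range enumerated so far. This would give $\aleph(A)>\omega_1$, contradicting Theorem~\ref{thm:invariants}. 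Because this route is independent of $\kappa$, it also confirms uniformity in $\kappa$.

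\emph{The $\ACWO$ dichotomy.} This is Corollary~\ref{cor:dichotomy}, which combines Theorem~\ref{thm:wochoice} for $\kappa<\aleph_{\omega_1}$ with Theorem~\ref{thm:failure} for $\kappa\ge\aleph_{\omega_1}$. In the second case, the sequence $\seq{S_\gamma:\gamma<\omega_1}$ of Theorem~\ref{thm:failure} is the $\omega_1$-indexed witness.

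There is no genuine obstacle. The only point that needs care is the $\DC_{\omega_1}$ clause, which is imported from Banerjee, and the sketch above makes it self-contained through the Hartogs number computation.
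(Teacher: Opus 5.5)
Your proposal is correct and matches the paper: the corollary has no separate proof there and simply collects Theorems~\ref{thm:ctble}, \ref{thm:dc}, \ref{thm:invariants}, Corollary~\ref{cor:dichotomy} (with the witness from Theorem~\ref{thm:failure}), and Banerjee's $\neg\DC_{\omega_1}$. Your self-contained argument for $\neg\DC_{\omega_1}$ is the one the paper sketches for $\DC_{\theta^+}$ in its final section: the relation asks for a new atom outside the well-orderable range of the earlier sequence, and a branch would contradict $\aleph(A)=\omega_1$; the detour through $\AC_{\omega_1}$ is not needed.
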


\begin{table}[htbp]
\centering
\begin{tabular}{@{}lccc@{}}
\toprule
 & $\ACWO$ & $\DC$ & $\DC_{\omega_1}$ \\
\midrule
$\aleph_1\le\kappa<\aleph_{\omega_1}$ & holds & holds & fails \\
$\kappa\ge\aleph_{\omega_1}$ & fails & holds & fails \\
\bottomrule
\end{tabular}
\caption{Choice principles in $\Vpp$ as the ambient cardinality $\kappa$ varies.}\label{tab:parameter}
\end{table}

Bruce asked whether an infinite amorphous set exists in $\Vp$ \cite[Section~5]{Bruce}; this is the open entry in Table~\ref{tab:comparison}. For the countable partition model, Corollary~\ref{cor:union} answers Banerjee's union question below $\aleph_{\omega_1}$, leaving the following cases.

\begin{question}\label{q:union}
For $\kappa\ge\aleph_{\omega_1}$, is every union of a well-orderable family of well-orderable sets well-orderable in $\Vpp$?
\end{question}

One can also allow supports with more blocks. Fix an infinite cardinal $\theta<\kappa$ and use partitions of $A$ into at most $\theta$ blocks. Banerjee considers this construction when $\kappa=\theta^+$ \cite[Remark~4.9]{Banerjee}. The arguments of Lemma~\ref{lem:orbits} and Theorem~\ref{thm:invariants} give $\aleph(A)=\aleph^*(A)=\theta^+$. The relation requiring each new atom to differ from all earlier ones then witnesses failure of $\DC_{\theta^+}$: every sequence of length below $\theta^+$ misses an atom, since $A$ is not well-orderable, but a branch would inject $\theta^+$ into $A$.

The proof of Lemma~\ref{lem:shift} also gives ordinary $\DC$ in this model. The forest now has at most $\theta$ nodes, and at most $|s|$ of the selected branches pass through any infinite node $s$: at each later level its descendants are disjoint nonempty subsets of $s$, and there are only countably many levels. The finitely many ancestors of $s$ add only finitely many selected branches. This replaces the countability bound in~\eqref{eq:branchsum}; the rest of the proof is unchanged, with at most $\theta$ blocks in $T_\omega$.

For regular $\theta$ satisfying $\theta^{<\theta}=\theta$, the filter is $\theta$-complete, since fewer than $\theta$ partitions with at most $\theta$ blocks have a common refinement with at most $\theta$ blocks. This yields $\DC_{<\theta}$. A sufficient condition for $\DC_\theta$ would be $(\theta+1)$-shift completeness \cite[Section~5]{KSDC}. These observations motivate the following two conjectures, which extend the results for $\theta=\omega$.

\begin{question}\label{q:theta}
For an infinite regular cardinal $\theta<\kappa$, does the model with at most $\theta$ blocks in each support satisfy $\ACWO$ exactly when $\kappa<\aleph_{\theta^+}$? Does it satisfy $\DC_\lambda$ for every infinite cardinal $\lambda\le\theta$?
\end{question}

The model $\Vpp$ satisfies $\DC$ while failing multiple choice, every Kinna--Wagner principle, and the ordering principle. Below $\aleph_{\omega_1}$ it also satisfies $\ACWO$.

\begin{question}\label{q:other}
Which further weak choice principles in the catalogue \cite{HR} hold in $\Vpp$, and how do they depend on $\kappa$?
\end{question}

\section*{Acknowledgments}

This paper forms part of the author's Ph.D.\ thesis at the University of Toronto.  The author thanks her supervisors, Spencer Unger and Asaf Karagila, for their guidance and support, and thanks Asaf Karagila in particular for his detailed comments on earlier drafts of this paper.

\end{document}